\numberwithin{equation}{section}
 \def\Hom{\mbox{\rm Hom}}  
 \def\fin{\hfill$\square$}   \def\mod{\mbox{\rm \textbf{mod}}\,}
\def\Thick{\mbox{\rm \textbf{Thick}}\,}
\def\Ker{\mbox{\rm Ker}\,}  
\def\cone{\mbox{\rm cone}}\def\cocone{\mbox{\rm cocone}}
\def\thick{\mbox{\rm thick}\,}
\def\Cone{\mbox{\rm Cone}}\def\Cocone{\mbox{\rm Cocone}}
\def\B{\mathcal {B}}\def\C{\mathcal {C}}
\def\A{\mathcal{A}} 
\def\Id{\mbox{\rm Id}\,} \def\Im{\mbox{\rm Im}\,} \def\add{\mbox{\rm add}\,}
\theoremstyle{plain}
\newtheorem{theorem}{\bf Theorem}[section]
\newtheorem{lemma}[theorem]{\bf Lemma}
\newtheorem{corollary}[theorem]{\bf Corollary}
\newtheorem{proposition}[theorem]{\bf Proposition}
\theoremstyle{definition}
\newtheorem{definition}[theorem]{\bf Definition}
\newtheorem{remark}[theorem]{\bf Remark}
\newtheorem{example}[theorem]{\bf Example}
\newcommand{\bt}{\begin{theorem}}
\newcommand{\et}{\end{theorem}}
\newcommand{\bl}{\begin{lemma}}
\newcommand{\el}{\end{lemma}}
\newcommand{\bd}{\begin{definition}}
\newcommand{\ed}{\end{definition}}
\newcommand{\bc}{\begin{corollary}}
\newcommand{\ec}{\end{corollary}}
\newcommand{\bp}{\begin{proof}}
\newcommand{\ep}{\end{proof}}
\newcommand{\bx}{\begin{example}}
\newcommand{\ex}{\end{example}}
\newcommand{\br}{\begin{remark}}
\newcommand{\er}{\end{remark}}
\newcommand{\be}{\begin{equation}}
\newcommand{\ee}{\end{equation}}
\newcommand{\ba}{\begin{align}}
\newcommand{\ea}{\end{align}}
\newcommand{\bn}{\begin{enumerate}}
\newcommand{\en}{\end{enumerate}}
\newcommand{\bcs}{\begin{cases}}
\newcommand{\ecs}{\end{cases}}
\newcommand{\RNum}[1]{\uppercase\expandafter{\romannumeral #1\relax}}
\renewcommand{\section}{\@startsection{section}{1}{0mm}
  {-\baselineskip}{0.5\baselineskip}{\bf\leftline}}
\begin{document}
\title[Thick subcategories and silting subcategories in recollement]{Thick subcategories and silting subcategories\\ in recollement }
\author[Y. Mei, L. Wang, J. Wei]{Yuxia Mei, Li Wang, Jiaqun Wei}
\address{School of Mathematics-Physics and Finance, Anhui Polytechnic University, Wuhu 241000, P. R. China.\endgraf}
\email{meiyuxia2010@163.com (Mei), wl04221995@163.com (Wang)}
\address{Department of Mathematics, Zhejiang Normal University, Jinhua 321004, P. R. China.\endgraf}
\email{weijiaqun@njnu.edu.cn (Wei)}

\subjclass[2010]{18E05, 18E30.}
\keywords{Extriangulated category; Recollement, thick subcategory, silting subcategory.}

\begin{abstract}
 Let $(\mathcal{A}, \mathcal{B}, \mathcal{C}, i^{*}, i_{\ast}, i^{!},j_!, j^\ast, j_\ast)$ be a recollement of extriangulated categories. We show that there is a bijection between thick subcategories in $\mathcal{C}$ and thick subcategories in $\mathcal{B}$ containing $i_{\ast}\mathcal{A}$. Futhermore, the thick subcategories $\mathcal{V}$ in $\mathcal{B}$ containing $i_{\ast}\mathcal{A}$ can induce a new recollement relative to $\mathcal{A}$ and $j^{\ast}\mathcal{V}$. We also prove that silting subcategories in $\mathcal{A}$ and $\mathcal{C}$ can be glued to get silting subcategories in $\mathcal{B}$ and the converse holds under certain conditions.

\end{abstract}

\maketitle

\section{Introduction}
Recollements of abelian categories were initially introduced by Be{\u\i}linson, Bernstein, and Deligne \cite{BBD} in their construction of the category of perverse sheaves on singular spaces. The concept of recollements in triangulated categories emerged in relation to derived categories of sheaves on topological spaces, where the idea was to "glue" one triangulated category from two others, as discussed in \cite{BBD}. It is well-established that abelian categories and triangulated categories serve as two fundamental frameworks in algebra and geometry.
To simultaneously generalize recollements of both abelian and triangulated categories, the study of recollements in extriangulated categories was undertaken in \cite{WL} by Wang, Zhang, and Wei.

A full subcategory $\mathcal{S}$ of a triangulated category $\mathcal{T}$ is called thick if it is a triangulated subcategory of $\mathcal{T}$ which is in addition closed under taking direct summands. The study of thick subcategories plays a significant role in the exploration of triangulated categories(see \cite{Ric}, \cite{Ver} and so on). Thick subcategories of extriangulated categories were introduced by Adachi and Tsukamoto in  \cite{AT}.
Our first aim is to study the relationship between thick subcategories in the context of a recollement of extriangulated categories.
Additionally, we consider the silting sucbategories which are a special class of
thick subcategories.

Currently, silting theory plays a significant role in the exploration of triangulated categories. In \cite{KY}, it was demonstrated that silting objects correspond to various important structures, including t-structures, co-t-structures, and simple-minded collections. The gluing techniques associated with recollements, as established by Beilinson, Bernstein, and Deligne, have been extensively examined in \cite{BBD} concerning t-structures. The recollements of silting objects \cite{ZW} was investigated in \cite{ZW}. Recently, Wang, Zhang, and Wei \cite{WL} illustrated how to glue two cotorsion pairs from categories $\mathcal{A}$ and $\mathcal{C}$ into a cotorsion pair in $\mathcal{B}$ with respect to a recollement $(\mathcal{A}, \mathcal{B}, \mathcal{C})$ of extriangulated categories. These significant findings prompt the natural inquiry into the gluing of silting subcategories in the context of a recollement of extriangulated categories. Notably, recent work by Adachi and Tsukamoto has established a bijection between bounded hereditary cotorsion pairs and silting subcategories in extriangulated categories \cite{AT}. This bijection will be pivotal in our approach, leading us to investigate whether we can extend this important theory to a more general setting.

Our first aim is to study the relationship between thick subcategories of extriangulated categories in the context of a recollement $(\mathcal{A}, \mathcal{B}, \mathcal{C}, i^{*}, i_{\ast}, i^{!},j_!, j^\ast, j_\ast)$ of extriangulated categories, culminating in the first main theorem of this paper.

\begin{theorem}(Theorem \ref{main-thick-bijection})
There exist mutually inverse bijections
between the set of thick subcategories of $\mathcal{B}$ that contain $i_{\ast}\mathcal{A}$ and the set of thick subcategories of $\mathcal{C}$, where the map given by $\Phi: \mathcal{V} \mapsto j^{\ast}\mathcal{V}$ and the inverse given by
$\Psi: \mathcal{W} \mapsto
\mathcal{V} = \{M \in \mathcal{B} \mid j^{\ast}M \in \mathcal{W}\}$.
\end{theorem}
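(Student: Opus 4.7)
The plan is to verify that $\Phi$ and $\Psi$ are well-defined and mutually inverse in four steps, exploiting the standard properties of a recollement: the exactness of $j^{\ast}$, the identities $j^{\ast}i_{\ast}=0$, $j^{\ast}j_{!}\cong\id\cong j^{\ast}j_{\ast}$, and the two families of defining conflations relating $\id_{\mathcal{B}}$ with $j_{!}j^{\ast}$, $i_{\ast}i^{\ast}$ (and dually).

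\textbf{Step 1 (well-definedness of $\Phi$).} For a thick subcategory $\mathcal{V}\subseteq\mathcal{B}$, I first show that $j^{\ast}\mathcal{V}$ is thick in $\mathcal{C}$. Closure under isomorphism and direct summands is immediate from the corresponding properties of $\mathcal{V}$ together with the fact that, in a recollement, $j^{\ast}j_{!}\cong\id_{\mathcal{C}}$ (so any summand of $j^{\ast}V$ lifts to a summand of $V\oplus j_{!}(\text{summand-complement})$, or more directly using that $\mathcal{V}$ is closed under summands and $j^{\ast}$ is additive). For closure under the $2$-out-of-$3$ property in conflations I use that $j^{\ast}$, being part of a recollement, is exact on extriangles, so any conflation in $\mathcal{C}$ with two terms in $j^{\ast}\mathcal{V}$ is the image under $j^{\ast}$ of a conflation in $\mathcal{B}$ built from lifts in $\mathcal{V}$, combined with a pushout/pullback argument.

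\textbf{Step 2 (well-definedness of $\Psi$).} For a thick $\mathcal{W}\subseteq\mathcal{C}$, thickness of $\Psi(\mathcal{W})=\{M\in\mathcal{B}\mid j^{\ast}M\in\mathcal{W}\}$ is a direct pullback of thickness of $\mathcal{W}$ along the exact additive functor $j^{\ast}$: closure under isomorphism, summands, and the $2$-out-of-$3$ conflation property are all transported by $j^{\ast}$. The inclusion $i_{\ast}\mathcal{A}\subseteq\Psi(\mathcal{W})$ follows from $j^{\ast}i_{\ast}=0$ and the fact that any thick subcategory contains $0$.

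\textbf{Step 3 ($\Phi\circ\Psi=\id$).} The inclusion $j^{\ast}\Psi(\mathcal{W})\subseteq\mathcal{W}$ is by definition. For the other inclusion, given $X\in\mathcal{W}$, set $M:=j_{!}X\in\mathcal{B}$; then $j^{\ast}M\cong X\in\mathcal{W}$, so $M\in\Psi(\mathcal{W})$ and $X\cong j^{\ast}M\in j^{\ast}\Psi(\mathcal{W})$.

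\textbf{Step 4 ($\Psi\circ\Phi=\id$).} The inclusion $\mathcal{V}\subseteq\Psi(\Phi(\mathcal{V}))$ is trivial. The converse is the main obstacle. Take $M\in\Psi(\Phi(\mathcal{V}))$, so there exists $V\in\mathcal{V}$ with $j^{\ast}M\cong j^{\ast}V$. The recollement supplies, for every object $N\in\mathcal{B}$, a canonical conflation (or a pair of conflations combining into one) of the form
\[
j_{!}j^{\ast}N\lra N\lra i_{\ast}i^{\ast}N\dashrightarrow
\]
in the extriangulated sense. Applying this to $V$ and using that $V\in\mathcal{V}$ and $i_{\ast}i^{\ast}V\in i_{\ast}\mathcal{A}\subseteq\mathcal{V}$, thickness of $\mathcal{V}$ forces $j_{!}j^{\ast}V\in\mathcal{V}$. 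Since $j_{!}j^{\ast}M\cong j_{!}j^{\ast}V$, we get $j_{!}j^{\ast}M\in\mathcal{V}$. Applying the same conflation to $M$ and using $i_{\ast}i^{\ast}M\in i_{\ast}\mathcal{A}\subseteq\mathcal{V}$ and $j_{!}j^{\ast}M\in\mathcal{V}$, thickness of $\mathcal{V}$ yields $M\in\mathcal{V}$, as required.

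The expected hard point is Step 4: one must verify that the canonical sequences of a recollement of extriangulated categories really do produce conflations (rather than merely morphisms with the right formal adjunction data), and that the $2$-out-of-$3$ property for thick subcategories in the extriangulated setting of Adachi--Tsukamoto applies to these conflations. Once this is in hand, the rest of the argument is a direct transcription of the classical triangulated proof.
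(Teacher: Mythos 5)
Your overall architecture coincides with the paper's: the decisive fact is that $j^{\ast}M\in j^{\ast}\mathcal{V}$ forces $M\in\mathcal{V}$ (your Step 4; the paper isolates this as a preliminary lemma), obtained from one of the canonical $4$-term sequences of the recollement together with $i_{\ast}\mathcal{A}\subseteq\mathcal{V}$. You use the (R5) sequence $i_{\ast}A'\to j_{!}j^{\ast}(-)\to(-)\to i_{\ast}i^{\ast}(-)$ where the paper uses the dual (R4) sequence, and in Step 3 you use $j_{!}$ where the paper uses $j_{\ast}$; these are mirror-image choices, not a different route. The ``hard point'' you flag in Step 4 is real but resolves exactly as you suspect: (R5) splits into two genuine conflations through an intermediate object, the extra fourth term lies in $i_{\ast}\mathcal{A}\subseteq\mathcal{V}$, and two applications of closure under extensions/cones/cocones give the conclusion.

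Two points in Step 1 need repair. First, your justification of closure of $j^{\ast}\mathcal{V}$ under direct summands does not work as stated: a direct summand $M_{1}$ of $j^{\ast}V$ need not be $j^{\ast}$ of a direct summand of $V$, so additivity of $j^{\ast}$ plus closure of $\mathcal{V}$ under summands is not enough, and the parenthetical lift via $j_{!}$ still requires knowing that the lift lies in $\mathcal{V}$. The correct argument writes $M_{1}\oplus M_{2}\cong j^{\ast}(B_{1}\oplus B_{2})$ using density of $j^{\ast}$ and then invokes precisely your Step 4 fact to get $B_{1}\oplus B_{2}\in\mathcal{V}$, hence $B_{1},B_{2}\in\mathcal{V}$ and $M_{i}\cong j^{\ast}B_{i}\in j^{\ast}\mathcal{V}$. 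Second, Step 1 as written never invokes the hypothesis $i_{\ast}\mathcal{A}\subseteq\mathcal{V}$, yet it is indispensable there: to lift a conflation $X\to Y\to Z$ of $\mathcal{C}$ into $\mathcal{V}$ one needs $j_{\ast}j^{\ast}\mathcal{V}\subseteq\mathcal{V}$ (or $j_{!}j^{\ast}\mathcal{V}\subseteq\mathcal{V}$) and again the Step 4 fact, both of which rest on $i_{\ast}\mathcal{A}\subseteq\mathcal{V}$ via the (R4)/(R5) sequences. Note also that $j_{\ast}$ is only left exact, so the lifted conflation has the form $j_{\ast}X\to j_{\ast}Y\to Z'$ with $j^{\ast}Z'\cong Z$ rather than $Z'=j_{\ast}Z$; establishing this isomorphism (via compatibility and the WIC condition) is where the extriangulated setting requires genuine care.
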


Our second aim is to investigate the gluing of silting subcategories in relation to a recollement of extriangulated categories, leading to the second main theorem of this paper.

\begin{theorem}(Theorem \ref{main-silt}) Let $(\mathcal{A}, \mathcal{B}, \mathcal{C}, i^{*}, i_{\ast}, i^{!},j_!, j^\ast, j_\ast)$ be a recollement of extriangulated categories as in {\rm (\ref{recolle})}. Suppose that $\mathcal{B}$ has enough projective objects and $i^{!}, j_{!}$ are exact.

$(1)$ Let $\mathcal{M}_{\mathcal{A}}$ and $\mathcal{M}_{\mathcal{C}}$ be silting subcategories in $\mathcal{A}$ and $\mathcal{C}$ respectively.
If the functor $i^{\ast}$ is exact, then
$$\mathcal{M}_{\mathcal{B}}:=\{B\in \mathcal{B}~|~i^{\ast }B\in\mathcal{M}_{\mathcal{A}}^{\vee},j^{\ast}B\in \mathcal{M}_{\mathcal{C}}^{\vee},i^{!}B\in\mathcal{M}_{\mathcal{A}}^{\wedge},j^{\ast}B\in \mathcal{M}_{\mathcal{C}}^{\wedge}\}$$
is a silting subcategory in $\mathcal{B}$. In addition, if $\mathcal{M}_{\mathcal{A}}=\add M_{A}$ and $\mathcal{M}_{\mathcal{C}} = \add M_{C}$, then $M_{\mathcal{B}} = i_{\ast}M_{A} \oplus j_{!}M_{C}$.

$(2)$ Let $\mathcal{M}$ be a silting subcategory in $\mathcal{B}$.
 \begin{itemize}
   \item[(a)]  If $i_{\ast}i^{!}\mathcal{M}^{\vee} \subseteq \mathcal{M}^{\vee}$ and $i_{\ast}i^{\ast}\mathcal{M}^{\vee} \subseteq \mathcal{M}^{\vee}$,
       then $i^{\ast}\mathcal{M}^{\vee} \cap i^{!}\mathcal{M}^{\wedge}$ is a silting subcategory of $\mathcal{A}$.
       In addition, if $\mathcal{M} = \add M$, then $i^{\ast}M$ is a silting object of $\mathcal{A}$.
   \item[(b)] If $j_{\ast}j^{\ast}\mathcal{M}^{\wedge} \subseteq \mathcal{M}^{\wedge}$ or $j_{!}j^{\ast}\mathcal{M}^{\vee} \subseteq \mathcal{M}^{\vee}$,
       then $j^{\ast}\mathcal{M}^{\vee}\cap j^{\ast}\mathcal{M}^{\wedge}$ is a silting subcategory of $\mathcal{C}$.
       In addition, if $\mathcal{M} = \add M$, then $j^{\ast}M$ is a silting object of $\mathcal{C}$.
 \end{itemize}
\end{theorem}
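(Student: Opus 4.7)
The plan is to reduce both directions of the theorem to analogous statements about cotorsion pairs via the Adachi--Tsukamoto bijection \cite{AT} between silting subcategories and bounded hereditary cotorsion pairs, and then to invoke the gluing/restriction machinery for cotorsion pairs in recollements of extriangulated categories developed in \cite{WL}. Throughout, the recollement identities $i^{\ast}i_{\ast}\cong\Id_{\mathcal{A}}\cong i^{!}i_{\ast}$, $j^{\ast}j_{!}\cong\Id_{\mathcal{C}}\cong j^{\ast}j_{\ast}$, $j^{\ast}i_{\ast}=0$, $i^{\ast}j_{!}=0$, $i^{!}j_{\ast}=0$ will be used freely.

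For part $(1)$, I would first translate $\mathcal{M}_{\mathcal{A}}$ and $\mathcal{M}_{\mathcal{C}}$ into bounded hereditary cotorsion pairs $(\mathcal{M}_{\mathcal{A}}^{\vee},\mathcal{M}_{\mathcal{A}}^{\wedge})$ in $\mathcal{A}$ and $(\mathcal{M}_{\mathcal{C}}^{\vee},\mathcal{M}_{\mathcal{C}}^{\wedge})$ in $\mathcal{C}$. Using the exactness of $i^{\ast}$, $i^{!}$, $j_{!}$ together with enough projectives in $\mathcal{B}$, I would then apply the cotorsion-pair gluing theorem of \cite{WL} to obtain a cotorsion pair $(\mathcal{X},\mathcal{Y})$ in $\mathcal{B}$ with
\[
\mathcal{X}=\{B\in\mathcal{B}\mid i^{\ast}B\in\mathcal{M}_{\mathcal{A}}^{\vee},\; j^{\ast}B\in\mathcal{M}_{\mathcal{C}}^{\vee}\},\quad
\mathcal{Y}=\{B\in\mathcal{B}\mid i^{!}B\in\mathcal{M}_{\mathcal{A}}^{\wedge},\; j^{\ast}B\in\mathcal{M}_{\mathcal{C}}^{\wedge}\}.
\]
Next I would check that $(\mathcal{X},\mathcal{Y})$ remains bounded and hereditary, and read off the corresponding silting subcategory as $\mathcal{X}\cap\mathcal{Y}=\mathcal{M}_{\mathcal{B}}$ via the Adachi--Tsukamoto correspondence in reverse. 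For the additive-generator statement, I would verify directly that $i_{\ast}M_{A}\oplus j_{!}M_{C}$ lies in $\mathcal{M}_{\mathcal{B}}$ and that its additive closure exhausts $\mathcal{M}_{\mathcal{B}}$, using the identities above together with the fact that $\add M_{A}=\mathcal{M}_{\mathcal{A}}$ and $\add M_{C}=\mathcal{M}_{\mathcal{C}}$.

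For part $(2)$, the starting point is the bounded hereditary cotorsion pair $(\mathcal{M}^{\vee},\mathcal{M}^{\wedge})$ in $\mathcal{B}$ associated with $\mathcal{M}$. In case (a), the closure hypotheses $i_{\ast}i^{!}\mathcal{M}^{\vee}\subseteq\mathcal{M}^{\vee}$ and $i_{\ast}i^{\ast}\mathcal{M}^{\vee}\subseteq\mathcal{M}^{\vee}$ are precisely what is needed so that, along the fully faithful embedding $i_{\ast}$, the pair restricts to a cotorsion pair in $\mathcal{A}$ whose core is $i^{\ast}\mathcal{M}^{\vee}\cap i^{!}\mathcal{M}^{\wedge}$; boundedness and hereditariness transfer because $i^{\ast}$ and $i^{!}$ are exact and send finite $\mathcal{M}$-resolutions to finite resolutions. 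Applying Adachi--Tsukamoto then produces a silting subcategory of $\mathcal{A}$, and when $\mathcal{M}=\add M$ the image $i^{\ast}M$ is easily identified as an additive generator. Case (b) is entirely parallel on the $\mathcal{C}$-side, with the simplification that the same functor $j^{\ast}$ governs both restrictions; that is why a single closure condition on either side---either $j_{\ast}j^{\ast}\mathcal{M}^{\wedge}\subseteq\mathcal{M}^{\wedge}$ via the right adjoint $j_{\ast}$, or $j_{!}j^{\ast}\mathcal{M}^{\vee}\subseteq\mathcal{M}^{\vee}$ via the left adjoint $j_{!}$---is sufficient.

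The main technical obstacle will be verifying that the glued or restricted cotorsion pair is still \emph{bounded hereditary}, not merely a cotorsion pair. For gluing, boundedness requires patching finite $\mathcal{M}_{\mathcal{A}}$- and $\mathcal{M}_{\mathcal{C}}$-resolutions together through the canonical conflation $j_{!}j^{\ast}B\to B\to i_{\ast}i^{\ast}B$ and bounding the resulting length, while hereditariness depends on controlling higher extensions via the exactness of $i^{\ast}$, $i^{!}$, $j_{!}$ and the existence of enough projectives in $\mathcal{B}$. In part $(2)$, the delicate point is that the stated closure conditions must be strong enough to ensure that the restricted classes are closed under the extriangulated structures of $\mathcal{A}$ and $\mathcal{C}$ and that boundedness descends from $\mathcal{B}$; verifying this descent, and in particular showing that the intersections $i^{\ast}\mathcal{M}^{\vee}\cap i^{!}\mathcal{M}^{\wedge}$ and $j^{\ast}\mathcal{M}^{\vee}\cap j^{\ast}\mathcal{M}^{\wedge}$ really coincide with the silting subcategories coming from the restricted cotorsion pairs, is where most of the work will lie.
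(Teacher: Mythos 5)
Your strategy coincides with the paper's: pass through the Adachi--Tsukamoto bijection, glue (resp.\ restrict) the associated bounded hereditary cotorsion pairs using the machinery of \cite{WL}, and then translate back. However, as written the proposal is an outline that defers exactly the step where the paper's real work lies. For part (1), the verification that the glued pair is \emph{bounded} is not a routine patching argument: the paper devotes Lemma \ref{main-silt-lemma} to constructing, for each $M\in\mathcal{B}$, an $\mathbb{E}_{\mathcal{B}}$-triangle $K_{n-1}\to T_n\to M\dashrightarrow$ with $T_n\in\mathcal{T}$ and with $i^{!}K_{n-1}$, $j^{\ast}K_{n-1}$ landing in the truncated classes $(\mathcal{T}_1)^{\wedge}_{m-1}$, $(\mathcal{T}_2)^{\wedge}_{n-1}$; this requires combining a $\mathcal{T}_2$-approximation of $j^{\ast}M$ pulled back along the unit $M\to j_{\ast}j^{\ast}M$ with a $\mathcal{T}_1$-approximation of $i^{\ast}H$ spliced in via (R5) and $(\mathrm{ET4})^{\mathrm{op}}$, and then an induction. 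Your sketch names the canonical conflation $j_!j^{\ast}B\to B\to i_{\ast}i^{\ast}B$ but does not explain how the two finite resolutions are actually interleaved so that the cosyzygies remain controlled under $i^{!}$ and $j^{\ast}$ simultaneously; without that, boundedness is not established. Similarly, (HCP) for the glued pair rests on the adjunction isomorphisms $\mathbb{E}^n_{\mathcal{B}}(i_{\ast}i^{\ast}T,F)\cong\mathbb{E}^n_{\mathcal{A}}(i^{\ast}T,i^{!}F)$ and $\mathbb{E}^n_{\mathcal{B}}(j_!j^{\ast}T,F)\cong\mathbb{E}^n_{\mathcal{C}}(j^{\ast}T,j^{\ast}F)$ (the paper's Lemma \ref{prop 311}), which need the stated exactness and enough-projectives hypotheses; this should be made explicit rather than asserted.

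A second, smaller issue: in part (2) you justify the transfer of boundedness and hereditariness ``because $i^{\ast}$ and $i^{!}$ are exact,'' but exactness of $i^{\ast}$ is \emph{not} a hypothesis of part (2) (only $i^{!}$ and $j_!$ are exact there). The paper avoids this by applying only the exact functor $i^{!}$ to a $\mathcal{U}$-resolution of $i_{\ast}X$ and then using the closure hypothesis $i_{\ast}i^{!}\mathcal{U}\subseteq\mathcal{U}$ to convert $i^{!}U_i\cong i^{\ast}i_{\ast}i^{!}U_i$ into an object of $i^{\ast}\mathcal{U}$. Your argument as stated would need to be repaired along these lines.
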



This paper is organized as follows: In the subsequent section, we will discuss properties of extriangulated categories, exact functors, and preliminary results regarding recollements that will be essential for the later sections.
Section 3 focuses on the relationship between thick subcategories of extriangulated categories in the context of a recollement $(\mathcal{A}, \mathcal{B}, \mathcal{C}, i^{*}, i_{\ast}, i^{!},j_!, j^\ast, j_\ast)$.
Section 4 is dedicated to gluing silting subcategories along a recollement of extriangulated categories. More precisely, we investigate how to glue two silting subcategories of $\mathcal{A}$ and $\mathcal{C}$, into a silting subcategory of $\mathcal{B}$ in a manner that is compatible with the bijection between silting subcategories and bounded hereditary cotorsion pairs and the converse holds under certain conditions.
Section 5 is dedicated to giving some applications of the main result in Section 4.

\subsection{Conventions and notation.}
In the context of an additive category $\mathscr{C}$, we assume that its subcategories are both {full} and closed under isomorphisms. For any object $M\in\mathscr{C}$, we denote by $\add M$ the additive closure of $M$, which is the full subcategory of $\mathscr{C}$ consisting of objects that are direct sums of direct summands of $M$.
Let us consider a finite acyclic quiver $Q$. We denote by $S_i$ the one-dimensional simple (left) $kQ$-module associated with the vertex $i$ of $Q$. Furthermore, we denote by $P_i$ the projective cover of $S_i$.

\section{Preliminaries}
Let us recall some notions and properties of extriangulated categories and their recollements from \cite{Na} and \cite{WL}, respectively.

\subsection{Extriangulated categories}
 First we recall from \cite{Na}  the definition of extriangulated categories. We refer the reader to \cite[Section 2]{Na} for more details.

\begin{definition}
(\cite[Definition 2.12]{Na})\label{F}
A triple $(\mathscr{C}, \mathbb{E}, \mathfrak{s})$ is called an {\em extriangulated category} if it satisfies the following conditions.

$\rm(ET1)$  $\mathbb{E}: \mathscr{C}^{op} \times \mathscr{C} \rightarrow Ab$ is an biadditive functor.

$\rm(ET2)$ $\mathfrak{s}$ is an additive realization of $\mathbb{E}$.

$\rm(ET3)$ Let $\delta \in \mathbb{E}(C, A)$ and $\delta' \in \mathbb{E}(C', A')$ be a  pair of $\mathbb{E}$-extensions, realized as
$\mathfrak{s}(\delta)=[A\stackrel{x}{\longrightarrow}B\stackrel{y}{\longrightarrow}C]$, $\mathfrak{s}(\delta')=[A'\stackrel{x'}{\longrightarrow}B'\stackrel{y'}{\longrightarrow}C']$. For any commutative square
$$\xymatrix{
  A \ar[d]_{a} \ar[r]^{x} & B \ar[d]_{b} \ar[r]^{y} & C \\
  A'\ar[r]^{x'} &B'\ar[r]^{y'} & C'}$$
in $\mathscr{C}$, there exists a morphism $(a,c)$: $\delta\rightarrow\delta'$ such that $cy=y'b$.

$\rm(ET3)^{op}$ Dual of $\rm(ET3)$.

$\rm(ET4)$~Let $\delta\in\mathbb{E}(D,A)$ and $\delta'\in\mathbb{E}(F,B)$ be $\mathbb{E}$-extensions realized by
$A\stackrel{f}{\longrightarrow}B\stackrel{f'}{\longrightarrow}D$ and $B\stackrel{g}{\longrightarrow}C\stackrel{g'}{\longrightarrow}F$, respectively.
Then there exists an object $E\in\mathscr{C}$, a commutative diagram
\begin{equation*}\label{2.1}
\xymatrix{
  A \ar@{=}[d]\ar[r]^-{f} &B\ar[d]_-{g} \ar[r]^-{f'} & D\ar[d]^-{d} \\
  A \ar[r]^-{h} & C\ar[d]_-{g'} \ar[r]^-{h'} & E\ar[d]^-{e} \\
   & F\ar@{=}[r] & F   }
\end{equation*}
in $\mathscr{C}$, and an $\mathbb{E}$-extension $\delta''\in \mathbb{E}(E,A)$ realized by $A\stackrel{h}{\longrightarrow}C\stackrel{h'}{\longrightarrow}E$, which satisfy the following:\\
$(\textrm{i})$ $D\stackrel{d}{\longrightarrow}E\stackrel{e}{\longrightarrow}F$ realizes $f'_{\ast}\delta'$,\\
$(\textrm{ii})$ $d^{\ast}\delta''=\delta$,\\
$(\textrm{iii})$ $f_{\ast}\delta''=e^{\ast}\delta'$.\\
$\rm(ET4)^{op}$ Dual of $\rm(ET4)$.
\end{definition}

In this section, we  always assume that $(\mathscr{C}, \mathbb{E}, \mathfrak{s})$ is an extriangulated category. If $\mathfrak{s}(\delta) = [A \stackrel{x}{\longrightarrow} B \stackrel{y}{\longrightarrow} C]$, then the sequence $A \stackrel{x}{\longrightarrow} B \stackrel{y}{\longrightarrow} C$ is called a {\em conflation}, where $x$ is called an {\em inflation} and $y$ is called a {\em deflation}. In this context, the sequence $A \stackrel{x}{\longrightarrow} B \stackrel{y}{\longrightarrow} C \stackrel{\delta}\dashrightarrow$ is called an $\mathbb{E}$-triangle. If necessary, we will express $A = \cocone(y)$ and $C = \cone(x)$. Furthermore, we say that an $\mathbb{E}$-triangle is {\em splitting} if it realizes 0.

An object $P$ in $\mathscr{C}$ is called {\em projective} if, for any conflation $A \stackrel{x}{\longrightarrow} B \stackrel{y}{\longrightarrow} C$ and for any morphism $c \in \mathscr{C}(P, C)$, there exists a morphism $b \in \mathscr{C}(P, B)$ such that $yb = c$. The full subcategory of projective objects in $\mathscr{C}$ is denoted by $\mathcal{P}(\mathscr{C})$.
Dually, we define the {\em injective} objects, and the full subcategory of injective objects in $\mathscr{C}$ is denoted by $\mathcal{I}(\mathscr{C})$. We say that $\mathscr{C}$ {\em has enough projectives} if, for every object $M \in \mathscr{C}$, there exists an $\mathbb{E}$-triangle $A \stackrel{}{\longrightarrow} P \stackrel{}{\longrightarrow} M \stackrel{}\dashrightarrow$ such that $P \in \mathcal{P}(\mathscr{C})$. Dually, we say that $\mathscr{C}$ {\em has enough injectives}.
In particular, if $\mathscr{C}$ is a triangulated category, then $\mathscr{C}$ has enough projectives and injectives, with $\mathcal{P}(\mathscr{C})$ and $\mathcal{I}(\mathscr{C})$ consisting of zero objects.

The higher positive extensions $\mathbb{E}^{k}(-, -)$ in an extriangulated category have recently been defined for $k\geq0$ in \cite{Go}. If $\mathscr{C}$ has enough projective objects or enough injective objects, these $\mathbb{E}^{k}$  are isomorphism to those defined in  \cite{Liu} (cf. \cite[Remark 3.4]{Go}).

\begin{lemma}\text{\rm(\cite[Definition 3.5]{Go})}\label{Long-exact} Let $A\stackrel{x}\longrightarrow B\stackrel{y}\longrightarrow C\stackrel{\delta}\dashrightarrow$ be an $\mathbb{E}$-triangle in $\mathscr{C}$. We have long exact sequences
\begin{equation*}
\cdots \longrightarrow \mathbb{E}^{k}(-, A)\stackrel{} \longrightarrow \mathbb{E}^{k}(-, B)\stackrel{} \longrightarrow \mathbb{E}^{k}(-, C)\stackrel{} \longrightarrow \mathbb{E}^{k+1}(-, A)\stackrel{} \longrightarrow \mathbb{E}^{k+1}(-, B) \longrightarrow \cdots
\end{equation*}
and
\begin{equation*}
\cdots \longrightarrow \mathbb{E}^{k}(C, -)\stackrel{} \longrightarrow \mathbb{E}^{k}(B, -)\stackrel{} \longrightarrow \mathbb{E}^{k}(A, -)\stackrel{} \longrightarrow \mathbb{E}^{k+1}(C, -)\stackrel{} \longrightarrow \mathbb{E}^{k+1}(B,-) \longrightarrow \cdots
\end{equation*}
\end{lemma}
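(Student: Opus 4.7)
The plan is to prove the contravariant long exact sequence
\begin{equation*}
\cdots \to \mathbb{E}^{k}(-, A) \to \mathbb{E}^{k}(-, B) \to \mathbb{E}^{k}(-, C) \to \mathbb{E}^{k+1}(-, A) \to \cdots ;
\end{equation*}
the covariant version follows by a dual argument. Since $\mathscr{C}$ is assumed to have enough projectives in the settings of interest (the hypothesis of the main theorem of Section~4), I will work with the concrete model of $\mathbb{E}^{k}$ via syzygies from \cite{Liu}, which by \cite[Remark~3.4]{Go} coincides with Gorsky's general definition used in the statement.

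First I would establish the base case $k=0$, namely the six-term exact sequence
\begin{equation*}
0 \to \Hom(-, A) \to \Hom(-, B) \to \Hom(-, C) \to \mathbb{E}(-, A) \to \mathbb{E}(-, B) \to \mathbb{E}(-, C).
\end{equation*}
This is the standard consequence of axioms (ET3) and (ET4) of Definition~\ref{F} combined with the additive realization $\mathfrak{s}$, and is already recorded in \cite{Na}. The remaining task is then to splice infinitely many such six-term sequences together to build the full long exact sequence.

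For the splicing I would use dimension shifting. Given a projective deflation $P_X \to X$, the associated syzygy conflation $\Omega X \to P_X \to X \dashrightarrow$ yields isomorphisms $\mathbb{E}^{k+1}(-, X) \cong \mathbb{E}^{k}(-, \Omega X)$ for every $k \geq 1$, because $\mathbb{E}^{k}(-, P_X) = 0$ when $P_X$ is projective (via the same six-term sequence, inductively). Given the conflation $A \to B \to C \dashrightarrow$, a horseshoe-type construction based on (ET4) and its dual produces a commutative diagram of $\mathbb{E}$-triangles with top row $\Omega A \to \Omega B \to \Omega C \dashrightarrow$ a conflation, middle row $P_A \to P_A \oplus P_C \to P_C \dashrightarrow$ consisting of projectives (hence split), and bottom row the given conflation. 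Applying the base case to the top row and transporting its terms through the shifting isomorphism extends the sequence by three more terms. Induction on $k$ then delivers the full long exact sequence.

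The main obstacle is the horseshoe construction: producing the middle syzygy $\Omega B$ together with maps from $P_A \oplus P_C$ and into $B$ so that every square commutes and the top row is again a conflation. This amounts to a coherent "addition" of projective covers along an $\mathbb{E}$-triangle, which is a technical but standard application of (ET4) together with the fact that extensions by projective objects split. Once this horseshoe lemma is in hand, the rest of the argument is a formal dimension-shifting induction.
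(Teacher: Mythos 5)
The paper gives no proof of this lemma at all: it is quoted from \cite[Definition 3.5]{Go}, where the higher extensions $\mathbb{E}^{k}$ are constructed so that both long exact sequences hold in an arbitrary extriangulated category. Your reduction to the case of enough projectives (via \cite[Remark 3.4]{Go} and \cite{Liu}) already loses that generality, though for the applications in Section~4 this would be tolerable.

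The genuine gap is in the dimension-shifting step. You assert that the syzygy conflation $\Omega X \to P_X \to X \dashrightarrow$ gives $\mathbb{E}^{k+1}(-,X) \cong \mathbb{E}^{k}(-,\Omega X)$ ``because $\mathbb{E}^{k}(-,P_X)=0$ when $P_X$ is projective.'' That vanishing is false: projectivity of $P$ means that every conflation $A\to B\to P$ splits, i.e.\ $\mathbb{E}^{k}(P,-)=0$ for $k\geq 1$; it says nothing about $\mathbb{E}^{k}(-,P)$, whose vanishing would mean $P$ is injective (already in a module category $\Ext^{k}(X,P)$ need not vanish for $P$ projective). Hence projective resolutions shift dimension only in the \emph{first} variable, $\mathbb{E}^{k+1}(X,-)\cong\mathbb{E}^{k}(\Omega X,-)$, and your horseshoe construction on $A\to B\to C$, used with the correct shifting isomorphism, proves the \emph{second} displayed sequence (the one in $\mathbb{E}^{k}(C,-),\mathbb{E}^{k}(B,-),\mathbb{E}^{k}(A,-)$), not the first. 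For the first sequence you must instead fix the contravariant slot at an object $X$, replace $X$ by its iterated syzygies, and splice the six-term sequences $\Hom(\Omega^{k}X,A)\to\cdots\to\mathbb{E}(\Omega^{k}X,C)$; this requires constructing the connecting morphism $\mathbb{E}(\Omega^{k-1}X,C)\to\mathbb{E}(\Omega^{k}X,A)$ and verifying exactness at the splice points, which is a separate argument your outline does not supply (the ``dual argument'' you invoke would instead need enough injectives). As written, the proposal leaves the first sequence unproved.
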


\subsection{Recollements of extriangulated categories}
%
%

A morphism $f$ in $\mathscr{C}$ is called {\em compatible} (cf. \cite[Definition 2.8]{WL}), if ``$f$ is both an inflation and a deflation" implies that $f$ is an isomorphism. For instance, each morphism  in an exact category is compatible. When $\mathscr{C}$ is a  triangulated category, the compatible morphisms are precisely the isomorphisms.

A finite sequence $$X_{n}\stackrel{d_{n}}{\longrightarrow}X_{n-1}\stackrel{d_{n-1}}{\longrightarrow}\cdots \stackrel{d_{2}}{\longrightarrow} X_{1}\stackrel{d_{1}}{\longrightarrow}X_{0}$$ in $\mathscr{C}$ is said to be an {\em $\mathbb{E}$-triangle sequence}, if there exist $\mathbb{E}$-triangles $X_{n}\stackrel{d_{n}}{\longrightarrow}X_{n-1}\stackrel{f_{n-1}}{\longrightarrow}K_{n-1}\stackrel{}\dashrightarrow$,
$K_{i+1}\stackrel{g_{i+1}}{\longrightarrow}X_{i}\stackrel{f_{i}}{\longrightarrow}K_{i}\stackrel{}\dashrightarrow,~~1<i<n-1$,
and $K_{2}\stackrel{g_{2}}{\longrightarrow}X_{1}\stackrel{d_1}{\longrightarrow}X_0\stackrel{}\dashrightarrow$ such that $d_i=g_if_i$ for any $1<i<n$.

\begin{definition}\label{right}  \text{\rm (\cite[Definition 2.9]{WL})}
A sequence $A\stackrel{f}{\longrightarrow}B\stackrel{g}{\longrightarrow}C$ in $\mathscr{C}$ is said to be {\em right exact} if there exists an $\mathbb{E}$-triangle $K\stackrel{h_{2}}{\longrightarrow}B\stackrel{g}{\longrightarrow}C\stackrel{}\dashrightarrow$ and a compatible deflation $h_{1}:A\rightarrow K$ such that $f=h_2h_1$. Dually, one can also define the {\em left exact} sequences.

A $4$-term $\mathbb{E}$-triangle sequence $A{\stackrel{f}\longrightarrow}B\stackrel{g}{\longrightarrow}C\stackrel{h}{\longrightarrow}D$ is called {\em right exact} (resp. {\em left exact}) if there exist $\mathbb{E}$-triangles $A\stackrel{f}{\longrightarrow}B\stackrel{g_1}{\longrightarrow}K\stackrel{}\dashrightarrow$
and $K\stackrel{g_{2}}{\longrightarrow}C\stackrel{h}{\longrightarrow}D\stackrel{}\dashrightarrow$ such that $g=g_2g_1$ and $g_1$ (resp. $g_2$) is compatible.
\end{definition}

%
%
%

\begin{definition}\label{right exact} \text{\rm (\cite[Definition 2.12]{WL})}  Let $(\mathcal{A},\mathbb{E}_{\mathcal{A}},\mathfrak{s}_{\mathcal{A}})$ and $(\mathcal{B},\mathbb{E}_{\mathcal{B}},\mathfrak{s}_{\mathcal{B}})$ be extriangulated categories. An additive covariant functor $F:\mathcal{A}\rightarrow \mathcal{B}$ is called a {\em right exact functor} if it satisfies the following conditions
\begin{itemize}
   \item [(1)] If $f$ is a compatible morphism in $\A$, then $Ff$ is compatible in $\B$.
   \item [(2)] If $A\stackrel{a}{\longrightarrow}B\stackrel{b}{\longrightarrow}C$ is right exact in $\mathcal{A}$, then $FA\stackrel{Fa}{\longrightarrow}FB\stackrel{Fb}{\longrightarrow}FC$ is right exact in $\mathcal{B}$. (Then for any $\mathbb{E}_{\mathcal{A}}$-triangle $A\stackrel{f}{\longrightarrow}B\stackrel{g}{\longrightarrow}C\stackrel{\delta}\dashrightarrow$, there exists an $\mathbb{E}_{\mathcal{B}}$-triangle $A'\stackrel{x}{\longrightarrow}FB\stackrel{Fg}{\longrightarrow}FC\stackrel{}\dashrightarrow$ such that $Ff=xy$ and $y: FA\rightarrow A'$ is a deflation and compatible. Moreover, $A'$ is uniquely determined up to isomorphism.)

  \item [(3)] There exists a natural transformation $$\eta=\{\eta_{(C,A)}:\mathbb{E}_{\mathcal{A}}(C,A)\longrightarrow\mathbb{E}_{\mathcal{B}}(F^{op}C,A')\}_{(C,A)\in{\A}^{\rm op}\times\A}$$ such that $\mathfrak{s}_{\mathcal{B}}(\eta_{(C,A)}(\delta))=[A'\stackrel{x}{\longrightarrow}FB\stackrel{Fg}{\longrightarrow}FC]$.

 \end{itemize}

\end{definition}

\begin{definition}\label{exact functor} \text{\rm (\cite[Definition 2.13]{WL})}
Let $(\mathcal{A},\mathbb{E}_{\mathcal{A}},\mathfrak{s}_{\mathcal{A}})$ and $(\mathcal{B},\mathbb{E}_{\mathcal{B}},\mathfrak{s}_{\mathcal{B}})$ be two extriangulated categories.  We say an additive covariant functor $F:\mathcal{A}\rightarrow \mathcal{B}$ is an {\em exact functor} if the following conditions hold.
\begin{itemize}
  \item [(1)] If $f$ is a compatible morphism in $\A$, then $Ff$ is compatible in $\B$.
  \item [(2)] There exists a natural transformation $$\eta=\{\eta_{(C,A)}\}_{(C,A)\in{\A}^{\rm op}\times\A}:\mathbb{E}_{\mathcal{A}}(-,-)\Rightarrow\mathbb{E}_{\mathcal{B}}(F^{\rm op}-,F-).$$
  \item [(3)] If $\mathfrak{s}_{\mathcal{A}}(\delta)=[A\stackrel{x}{\longrightarrow}B\stackrel{y}{\longrightarrow}C]$, then $\mathfrak{s}_{\mathcal{B}}(\eta_{(C,A)}(\delta))=[F(A)\stackrel{F(x)}{\longrightarrow}F(B)\stackrel{F(y)}{\longrightarrow}F(C)]$.

  \end{itemize}
\end{definition}

\begin{lemma}\label{adjoint}  \text{\rm (\cite[Definition 2.16]{WL})} Let $\mathcal{A}$ and $\mathcal{B}$ be two categories, and let $F:\mathcal{A}\rightarrow \mathcal{B}$ be a functor which admits a right adjoint functor $G$. Let $\eta:\Id_{\mathcal{A}}\Rightarrow GF$ be the unit and $\epsilon:FG\Rightarrow\Id_{\mathcal{B}}$ be the counit.

$(1)$ $\Id_{FX}=\epsilon_{FX}F{(\eta_{X})}$ for any $X\in\mathcal{A}$.

$(2)$ $\Id_{GY}=G(\epsilon_{Y}){\eta_{GY}}$ for any $Y\in\mathcal{B}$.
\end{lemma}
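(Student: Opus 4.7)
The plan is to deduce the two triangle identities from the defining natural isomorphism $\phi_{X,Y}:\Hom_{\mathcal{B}}(FX,Y)\cong\Hom_{\mathcal{A}}(X,GY)$ of the adjunction $F\dashv G$. By the very definitions of the unit and counit, $\eta_X=\phi_{X,FX}(\Id_{FX})$ and $\epsilon_Y=\phi^{-1}_{GY,Y}(\Id_{GY})$, so the whole proof reduces to unpacking what $\phi$ and $\phi^{-1}$ do to these canonical elements.

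First I would establish the standard explicit descriptions of $\phi$ and $\phi^{-1}$ in terms of the unit and counit, namely $\phi_{X,Y}(g)=G(g)\circ\eta_X$ for any $g:FX\to Y$ and $\phi^{-1}_{X,Y}(f)=\epsilon_Y\circ F(f)$ for any $f:X\to GY$. Both follow by naturality: for the first, apply the naturality square for $\phi$ in the second variable associated to $g:FX\to Y$ to the element $\Id_{FX}\in\Hom_{\mathcal{B}}(FX,FX)$, which yields $\phi(g)=\phi(g\circ\Id_{FX})=G(g)\circ\phi(\Id_{FX})=G(g)\circ\eta_X$. The second formula is strictly dual, applying naturality of $\phi^{-1}$ in the first variable to $\Id_{GY}\in\Hom_{\mathcal{A}}(GY,GY)$.

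With these two formulas in hand, both parts are immediate. For (1), applying $\phi^{-1}_{X,FX}$ to $\eta_X=\phi_{X,FX}(\Id_{FX})$ returns $\Id_{FX}$ on one side, while the explicit formula for $\phi^{-1}$ rewrites the other side as $\epsilon_{FX}\circ F(\eta_X)$; comparing gives $\Id_{FX}=\epsilon_{FX}F(\eta_X)$. For (2), applying $\phi_{GY,Y}$ to $\epsilon_Y=\phi^{-1}_{GY,Y}(\Id_{GY})$ gives $\Id_{GY}$ on one side and $G(\epsilon_Y)\circ\eta_{GY}$ on the other, so $\Id_{GY}=G(\epsilon_Y)\eta_{GY}$. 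There is no genuine obstacle here; the only care needed is to keep the subscripts on $\eta$ and $\epsilon$ straight and to invoke naturality in the correct variable at each step.
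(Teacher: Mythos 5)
Your argument is correct: the explicit formulas $\phi_{X,Y}(g)=G(g)\circ\eta_X$ and $\phi^{-1}_{X,Y}(f)=\epsilon_Y\circ F(f)$ follow from naturality exactly as you describe, and applying $\phi^{-1}$ to $\eta_X=\phi(\Id_{FX})$ (resp.\ $\phi$ to $\epsilon_Y=\phi^{-1}(\Id_{GY})$) yields the two triangle identities. The paper offers no proof of this lemma at all --- it is simply quoted from \cite[Definition 2.16]{WL} as a standard fact about adjunctions --- so your derivation supplies the routine verification the authors chose to omit, and it is the standard one.
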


 We always assume that any extrianglated category satisfies the (WIC) condition  (see \cite[Condition 5.8]{Na}).

\begin{definition}\label{recollement} \text{\rm (\cite[Definition 3.1]{WL})}
Let $\mathcal{A}$, $\mathcal{B}$ and $\mathcal{C}$ be three extriangulated categories. A \emph{recollement} of $\mathcal{B}$ relative to
$\mathcal{A}$ and $\mathcal{C}$, denoted by $(\mathcal{A}, \mathcal{B}, \mathcal{C}, i^{*}, i_{\ast}, i^{!},j_!, j^\ast, j_\ast)$, is a diagram
\begin{equation}\label{recolle}
  \xymatrix{\mathcal{A}\ar[rr]|{i_{*}}&&\ar@/_1pc/[ll]|{i^{*}}\ar@/^1pc/[ll]|{i^{!}}\mathcal{B}
\ar[rr]|{j^{\ast}}&&\ar@/_1pc/[ll]|{j_{!}}\ar@/^1pc/[ll]|{j_{\ast}}\mathcal{C}}
\end{equation}
given by two exact functors $i_{*},j^{\ast}$, two right exact functors $i^{\ast}$, $j_!$ and two left exact functors $i^{!}$, $j_\ast$, which satisfies the following conditions:
\begin{itemize}
  \item [(R1)] $(i^{*}, i_{\ast}, i^{!})$ and $(j_!, j^\ast, j_\ast)$ are adjoint triples.
  \item [(R2)] $\Im i_{\ast}=\Ker j^{\ast}$.
  \item [(R3)] $i_\ast$, $j_!$ and $j_\ast$ are fully faithful.
  \item [(R4)] For each $X\in\mathcal{B}$, there exists a left exact $\mathbb{E}_{\B}$-triangle sequence
  \begin{equation}\label{first}
  \xymatrix{i_\ast i^! X\ar[r]^-{\theta_X}&X\ar[r]^-{\vartheta_X}&j_\ast j^\ast X\ar[r]&i_\ast A}
   \end{equation}
  with $A\in \mathcal{A}$, where $\theta_X$ and  $\vartheta_X$ are given by the adjunction morphisms.
  \item [(R5)] For each $X\in\mathcal{B}$, there exists a right exact $\mathbb{E}_{\B}$-triangle sequence
  \begin{equation}\label{second}
  \xymatrix{i_\ast\ar[r] A' &j_! j^\ast X\ar[r]^-{\upsilon_X}&X\ar[r]^-{\nu_X}&i_\ast i^\ast X&}
   \end{equation}
 with $A'\in \mathcal{A}$, where $\upsilon_X$ and $\nu_X$ are given by the adjunction morphisms.
\end{itemize}
\end{definition}

Now, we collect some properties of recollement of extriangulated categories, which will be used in the sequel.

\begin{lemma}\label{CY} \text{\rm (\cite[Lemma 3.3]{WL})} Let $(\mathcal{A}, \mathcal{B}, \mathcal{C}, i^{*}, i_{\ast}, i^{!},j_!, j^\ast, j_\ast)$ be a recollement of extriangulated categories as in (\ref{recolle}).

$(1)$ All the natural transformations
$$i^{\ast}i_{\ast}\Rightarrow\Id_{\A},~\Id_{\A}\Rightarrow i^{!}i_{\ast},~\Id_{\C}\Rightarrow j^{\ast}j_{!},~j^{\ast}j_{\ast}\Rightarrow\Id_{\C}$$
are natural isomorphisms.

$(2)$ $i^{\ast}j_!=0$ and $i^{!}j_\ast=0$.

$(3)$ $i^{\ast}$ preserves projective objects and $i^{!}$ preserves injective objects.

$(3')$ $j_{!}$ preserves projective objects and $j_{\ast}$ preserves injective objects.

$(4)$ If $i^{!}$ (resp. $j_{\ast}$) is  exact, then $i_{\ast}$ (resp. $j^{\ast}$) preserves projective objects.

$(4')$ If $i^{\ast}$ (resp. $j_{!}$) is  exact, then $i_{\ast}$ (resp. $j^{\ast}$) preserves injective objects.

$(5)$ If $\mathcal{B}$ has enough projectives, then $\mathcal{A}$ has enough projectives and $\mathcal{P}(\mathcal{A})=\add(i^{\ast}(\mathcal{P}(\mathcal{B})))$; if $\mathcal{B}$ has enough injectives, then $\mathcal{A}$ has enough injectives and $\mathcal{I}(\mathcal{A})=\add(i^{!}(\mathcal{I}(\mathcal{B})))$.

$(6)$  If $\mathcal{B}$ has enough projectives and $j_{\ast}$ is exact, then $\mathcal{C}$ has enough projectives and $\mathcal{P}(\mathcal{C})=\add(j^{\ast}(\mathcal{P}(\mathcal{B})))$; if $\mathcal{B}$ has enough injectives and $j_{!}$ is exact, then $\mathcal{C}$ has enough injectives and $\mathcal{I}(\mathcal{C})=\add(j^{\ast}(\mathcal{I}(\mathcal{B})))$.

$(7)$ If $\mathcal{B}$ has enough projectives and $i^{!}$ is  exact, then $\mathbb{E}_{\mathcal{B}}(i_{\ast}X,Y)\cong\mathbb{E}_{\mathcal{A}}(X,i^{!}Y)$ for any $X\in\mathcal{A}$ and $Y\in\mathcal{B}$.

$(7')$  If $\mathcal{C}$ has enough projectives and $j_{!}$ is  exact, then $\mathbb{E}_{\mathcal{B}}(j_{!}Z,Y)\cong\mathbb{E}_{\mathcal{C}}(Z,j^{\ast}Y)$ for any $Y\in\mathcal{B}$ and $Z\in\mathcal{C}$.

$(8)$  If $i^{\ast}$ is exact, then $j_{!}$ is  exact.

$(8')$ If $i^{!}$ is exact, then $j_{\ast}$ is exact.
\end{lemma}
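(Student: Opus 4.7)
The plan is to derive each of the eight items by systematically exploiting the two adjoint triples $(i^{*},i_{*},i^{!})$ and $(j_{!},j^{*},j_{*})$ together with the defining properties (R1)--(R5) and the general principle that a left (resp.\ right) adjoint of an exact functor between extriangulated categories preserves projective (resp.\ injective) objects. For (1), fully faithfulness of $i_{*}$ is equivalent, via the triangle identities in Lemma \ref{adjoint}, to the assertion that the counit $i^{*}i_{*}\Rightarrow\Id_{\A}$ and the unit $\Id_{\A}\Rightarrow i^{!}i_{*}$ are natural isomorphisms; the same reasoning applied to $j_{!}$ and $j_{*}$ gives the remaining two isomorphisms. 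Part (2) then follows by a Yoneda--adjunction computation: for $A\in\A$ and $C\in\C$,
\[
\Hom(i^{*}j_{!}C,A)\cong\Hom(j_{!}C,i_{*}A)\cong\Hom(C,j^{*}i_{*}A)=0
\]
by (R1) and (R2), so $i^{*}j_{!}C=0$, and dually $i^{!}j_{*}=0$.

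Items (3)--(4') are instances of the adjoint/exactness principle stated above: $i^{*}$ is left adjoint to the exact functor $i_{*}$, so it preserves projectives, while $i^{!}$ is right adjoint to $i_{*}$ and so preserves injectives; when $i^{!}$ becomes exact, $i_{*}$ is now the left adjoint of an exact functor and therefore preserves projectives, giving (4), and similarly for (4') and the $j$-side statements in (3'). For (8) and (8'), the exactness of $j_{!}$ under the hypothesis that $i^{*}$ is exact is checked by feeding an $\mathbb{E}_{\C}$-triangle through $j_{!}$, then splicing with the right exact sequence (\ref{second}) applied to each term; the exactness of $i^{*}$ lets one collapse the extra $i_{*}A'$-terms and conclude that the resulting sequence in $\B$ is a conflation. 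Compatibility of the associated natural transformation on $\mathbb{E}^{1}$-groups, as required by Definition \ref{exact functor}, is then verified by naturality of the adjunction isomorphisms.

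For (5) and (6), given $X\in\A$, one chooses a deflation $P\to i_{*}X$ in $\B$ with $P\in\mathcal{P}(\B)$ and applies $i^{*}$; using (1) and the fact that $i^{*}$ preserves projectives gives a deflation $i^{*}P\to X$ in $\A$ with $i^{*}P\in\add(i^{*}\mathcal{P}(\B))$, and the reverse inclusion $\add(i^{*}\mathcal{P}(\B))\subseteq\mathcal{P}(\A)$ is (3). The analogous argument for $\C$ in (6) uses (4') to guarantee that $j^{*}$ preserves projectives when $j_{*}$ is exact. Items (7) and (7') are proved by dimension-shifting with Lemma \ref{Long-exact}: choose a conflation $Y\to I\to Y'$ with $I$ injective in $\B$, so that $\mathbb{E}_{\B}(i_{*}X,Y)\cong\Coker(\Hom(i_{*}X,I)\to\Hom(i_{*}X,Y'))$; transporting along the adjunction $(i_{*},i^{!})$ and using that $i^{!}$ is exact identifies this cokernel with $\mathbb{E}_{\A}(X,i^{!}Y)$, and (7') is dual.

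I expect the genuine obstacle to be in (8) and (8'): one must upgrade the merely right-exact (resp.\ left-exact) functor $j_{!}$ (resp.\ $j_{*}$) to a full exact functor in the sense of Definition \ref{exact functor}, which requires producing $\mathbb{E}$-triangles rather than only right exact sequences and verifying that the induced natural transformation on $\mathbb{E}$-groups respects realizations. Everything else reduces, with some careful bookkeeping, to standard adjunction manipulations and the long exact sequences of Lemma \ref{Long-exact} applied to (\ref{first}) and (\ref{second}).
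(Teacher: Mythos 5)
The paper does not prove this lemma at all: it is quoted verbatim from \cite[Lemma 3.3]{WL}, so your sketch can only be checked against the statement and the cited source's strategy. Items (1)--(6) of your proposal are correct and are exactly the standard arguments: full faithfulness $\Leftrightarrow$ unit/counit being isomorphisms via Lemma \ref{adjoint}, the Yoneda--adjunction computation for (2) using (R2), the principle that a left (resp.\ right) adjoint of a deflation-preserving (resp.\ inflation-preserving) functor preserves projectives (resp.\ injectives) for (3)--(4'), and the splitting of a deflation onto a projective for the $\add$-equalities in (5)--(6).

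There are, however, two genuine gaps. First, in (7) you dimension-shift in the second variable, choosing a conflation $Y\rightarrow I\rightarrow Y'$ with $I$ injective; the hypothesis supplies enough \emph{projectives} in $\mathcal{B}$, not enough injectives, so this conflation need not exist, and ``(7') is dual'' inherits the same defect (there only $\mathcal{C}$ is assumed to have enough projectives). The correct shift is in the first variable: by (5) take a conflation $K\rightarrow Q\rightarrow X$ in $\mathcal{A}$ with $Q$ projective, apply the exact functor $i_{\ast}$, which preserves projectives by (4) since $i^{!}$ is exact, and compare the two cokernel presentations of $\mathbb{E}$ through $\Hom_{\mathcal{B}}(i_{\ast}-,Y)\cong\Hom_{\mathcal{A}}(-,i^{!}Y)$; this is precisely the mechanism of Lemma \ref{H-extension} and Lemma \ref{prop 311}. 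Second, (8) and (8') are not actually proved. Following your outline, the right exact structure of $j_{!}$ applied to a conflation $X\rightarrow Y\rightarrow Z$ in $\mathcal{C}$ yields a conflation $K\rightarrow j_{!}Y\rightarrow j_{!}Z$ and a compatible deflation $h:j_{!}X\rightarrow K$ whose cocone $W$ satisfies $j^{\ast}W=0$, hence $W\cong i_{\ast}A'$ by (R2); but applying the exact functor $i^{\ast}$ to the conflation $i_{\ast}A'\rightarrow j_{!}X\rightarrow K$ only produces a conflation $A'\rightarrow 0\rightarrow i^{\ast}K$, which in a general extriangulated category does \emph{not} force $A'=0$ (in the triangulated case it merely says $i^{\ast}K\cong A'[1]$). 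So ``collapsing the extra $i_{\ast}A'$-terms'' is exactly the point that needs an argument, and one must in addition produce the natural transformation on $\mathbb{E}$-groups demanded by Definition \ref{exact functor}. Since you flag this as the genuine obstacle and leave it unresolved, the proposal does not establish (8) and (8').
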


%

\section{Recollements of thick subcategories}


First, we recall the definition of thick subcategories in an extriangulated category from \cite{AT}.
Assume that $\mathcal{D}$ and $\mathcal{D}'$ are two subcategories of the extriangulated category $\mathscr{C}$. Let $\mathcal{D}\ast\mathcal{D}'$ denote the subcategory of $\mathscr{C}$ consisting of $X\in\mathcal{C}$ that admit an $\mathbb{E}$-triangle $D\stackrel{}{\longrightarrow}X\stackrel{}{\longrightarrow}D'\stackrel{}\dashrightarrow$ with $D\in\mathcal{D}$ and $D'\in\mathcal{D}'$.
We say that $\mathcal{D}$ is {\em closed under extensions} if $\mathcal{D}\ast\mathcal{D} \subseteq \mathcal{D}$. Let $\text{Cone}(\mathcal{D},\mathcal{D}')$ denote the subcategory of $\mathscr{C}$ consisting of $X\in\mathcal{C}$ that admit an $\mathbb{E}$-triangle
$D\stackrel{}{\longrightarrow} D'\stackrel{}{\longrightarrow}X\stackrel{}\dashrightarrow$ with $D\in\mathcal{D}$ and $D'\in\mathcal{D}'$.
We say that $\mathcal{D}$ is {\em closed under cones} if $\text{Cone}(\mathcal{D},\mathcal{D}) \subseteq \mathcal{D}$.
Let $\text{Cocone}(\mathcal{D},\mathcal{D}')$ denote the subcategory of $\mathscr{C}$ consisting of $X\in\mathcal{C}$ that admit an $\mathbb{E}$-triangle
$X\stackrel{}{\longrightarrow} D\stackrel{}{\longrightarrow}D'\stackrel{}\dashrightarrow$ with $D\in\mathcal{D}$ and $D'\in\mathcal{D}'$.
We say that $\mathcal{D}$ is {\em closed under cocones} if
$\text{Cocone}(\mathcal{D},\mathcal{D}) \subseteq \mathcal{D}$.

\begin{definition}\label{thick} \text{\rm (\cite[Definition 2.4]{AT})}
Assume that $\mathcal{D}$ is a subcategory of the extriangulated category $\mathscr{C}$.
We call $\mathcal{D}$ a {\em thick subcategory} of  $\mathcal{C}$ if it is closed under extensions, cones, cocones and direct summands. Let $\thick\mathcal{D}$ denote the smallest thick subcategory containing $\mathcal{D}$.

\end{definition}

In what follows, we always assume that $(\mathcal{A}, \mathcal{B}, \mathcal{C}, i^{*}, i_{\ast}, i^{!},j_!, j^\ast, j_\ast)$  is a recollement of extriangulated categories as in (\ref{recolle}). We start with the following observation.

\begin{lemma}\label{lemma 31}
Let $\mathcal{V}$ be a thick subcategory of $\mathcal{B}$ such that $i_{\ast}\mathcal{A} \subseteq \mathcal{V}$. Then the following holds.
\begin{itemize}
\item[(1)] $i_{\ast}i^{\ast}\mathcal{V} \subseteq \mathcal{V}$ and $i_{\ast}i^{!}\mathcal{V} \subseteq \mathcal{V}$.
\item[(2)] $j_{\ast}j^{\ast}\mathcal{V} \subseteq \mathcal{V}$ and $j_{!}j^{\ast}\mathcal{V} \subseteq \mathcal{V}$.
\item[(3)] If $j^{\ast}V \in j^{\ast}\mathcal{V}$ for some $V \in \mathcal{B}$, then $V \in \mathcal{V}$.
\end{itemize}
\end{lemma}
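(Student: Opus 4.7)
The plan is to exploit the two recollement sequences (R4) and (R5) together with the four closure properties (extensions, cones, cocones, direct summands) packaged into the definition of a thick subcategory. No genuine obstacle is expected; the whole argument is a short diagram chase, and the only care required is bookkeeping, namely matching the right closure property to each of the two constituent $\mathbb{E}$-triangles obtained by decomposing the four-term recollement sequences via Definition~\ref{right}.

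Part~(1) is in fact immediate and does not use thickness at all: for any $V \in \mathcal{V} \subseteq \mathcal{B}$, both $i^{\ast}V$ and $i^{!}V$ lie in $\mathcal{A}$, so $i_{\ast}i^{\ast}V, i_{\ast}i^{!}V \in i_{\ast}\mathcal{A} \subseteq \mathcal{V}$.

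For part~(2), I feed an object $V \in \mathcal{V}$ into the right exact sequence supplied by (R5), namely $i_{\ast}A' \to j_{!}j^{\ast}V \to V \to i_{\ast}i^{\ast}V$, which by Definition~\ref{right} splits into $\mathbb{E}$-triangles $i_{\ast}A' \to j_{!}j^{\ast}V \to K \dashrightarrow$ and $K \to V \to i_{\ast}i^{\ast}V \dashrightarrow$. The second triangle exhibits $K$ as a cocone of two objects of $\mathcal{V}$ (using part~(1) for $i_{\ast}i^{\ast}V$), so $K \in \mathcal{V}$; the first then exhibits $j_{!}j^{\ast}V$ as an extension of two objects of $\mathcal{V}$, so $j_{!}j^{\ast}V \in \mathcal{V}$. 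Dually, feeding $V$ into (R4) yields triangles $i_{\ast}i^{!}V \to V \to K' \dashrightarrow$ and $K' \to j_{\ast}j^{\ast}V \to i_{\ast}A \dashrightarrow$; cone closure gives $K' \in \mathcal{V}$, and extension closure then gives $j_{\ast}j^{\ast}V \in \mathcal{V}$.

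For part~(3), suppose $j^{\ast}V \cong j^{\ast}V'$ for some $V' \in \mathcal{V}$. Part~(2) applied to $V'$ shows $j_{!}j^{\ast}V \cong j_{!}j^{\ast}V' \in \mathcal{V}$. I then apply (R5) to $V$ itself and decompose in the same way: the first triangle $i_{\ast}A' \to j_{!}j^{\ast}V \to K \dashrightarrow$ has both outer terms in $\mathcal{V}$, so cone closure delivers $K \in \mathcal{V}$; the second triangle $K \to V \to i_{\ast}i^{\ast}V \dashrightarrow$ then presents $V$ as an extension of two objects of $\mathcal{V}$, yielding $V \in \mathcal{V}$. Notably the argument uses none of the exactness hypotheses on the six recollement functors.
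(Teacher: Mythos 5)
Your proof is correct and follows essentially the same route as the paper: part (1) is the same one-line observation, and part (2) decomposes the (R4) and (R5) sequences into two $\mathbb{E}$-triangles and applies cone/cocone closure followed by extension closure exactly as the paper does. The only (immaterial) difference is in part (3), where you run the argument through (R5) and the fact that $j_{!}j^{\ast}V\cong j_{!}j^{\ast}V'\in\mathcal{V}$, whereas the paper uses the dual decomposition from (R4) and $j_{\ast}j^{\ast}V\in\mathcal{V}$; both hinge on the same key point that $i_{\ast}i^{\ast}V$ and $i_{\ast}i^{!}V$ lie in $i_{\ast}\mathcal{A}\subseteq\mathcal{V}$ even before $V$ itself is known to be in $\mathcal{V}$.
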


\begin{proof}
$(1)$ Notice that $i_{\ast}\mathcal{A} \subseteq \mathcal{V} \subseteq \mathcal{B}$. It is evident that for any $V \in \mathcal{V}$, we have $i^{\ast}V \in \mathcal{A}$. This implies that $i_{\ast}i^{\ast}\mathcal{V} \subseteq \mathcal{V}$. The proof for $i_{\ast}i^{!}\mathcal{V} \subseteq \mathcal{V}$ follows similarly.

$(2)$ For each object $V \in \mathcal{V} \subseteq \mathcal{B}$, it follows from (R4) that there exists a commutative diagram
\begin{equation*}\label{three1}
\xymatrix{
  &i_\ast i^! V \ar[r]^-{\theta_V}&V\ar[rr]^-{\vartheta_V}\ar[dr]_{h_{1}}&  &j_\ast j^\ast V \ar[r]&i_\ast A_{1} &\\
           &                &       &  M \ar[ur]_{h_{2}}& }
\end{equation*}
with $A_{1} \in \mathcal{A}$, where $\theta_V$ and $\vartheta_V$ are given by the adjunction morphisms, such that $i_\ast i^! V\stackrel{\theta_V}{\longrightarrow}V\stackrel{h_{1}}{\longrightarrow}M\stackrel{}\dashrightarrow$ and $M\stackrel{h_{2}}{\longrightarrow}j_\ast j^\ast V\stackrel{ }{\longrightarrow}i_\ast A_{1}\stackrel{}\dashrightarrow$ are $\mathbb{E}_{\B}$-triangles. From the first $\mathbb{E}_{\B}$-triangle, we conclude that $M \in \mathcal{V}$, which follows directly from the definition of a thick subcategory, since $\mathcal{V}$ is a thick subcategory of $\mathcal{B}$ and $i_\ast i^! V \in \mathcal{V}$. Now, considering the second $\mathbb{E}_{\B}$-triangle, we have that $i_\ast A_{1} \in \mathcal{V}$, as $i_{\ast}\mathcal{A} \subseteq \mathcal{V}$. Since $\mathcal{V}$ is closed under extensions, we conclude that $j_{\ast}j^{\ast}V \in \mathcal{V}$ and thus $j_{\ast}j^{\ast}\mathcal{V} \subseteq \mathcal{V}$. Similarly, we can prove $j_{!}j^{\ast}\mathcal{V} \subseteq \mathcal{V}$.


$(3)$ By hypothesis and (R4), we can construct a commutative diagram
\begin{equation*}\label{three2}
\xymatrix{
  &i_\ast i^! V \ar[r]^-{\theta_V}&V\ar[rr]^-{\vartheta_V}\ar[dr]_{k_{1}}&  &j_\ast j^\ast V \ar[r]&i_\ast A_{2} &\\
           &                &       &  K \ar[ur]_{k_{2}}& }
\end{equation*}
with $A_{2} \in \mathcal{A}$, such that the sequences $i_\ast i^! V\stackrel{\theta_V}{\longrightarrow}V\stackrel{k_{1}}{\longrightarrow}K\stackrel{}\dashrightarrow$ and $K\stackrel{k_{2}}{\longrightarrow}j_\ast j^\ast V\stackrel{ }{\longrightarrow}i_\ast A_{2}\stackrel{}\dashrightarrow$ are $\mathbb{E}_{\B}$-triangles. By (2), we have $j_{\ast}j^{\ast}V \in j_{\ast}j^{\ast}\mathcal{V} \subseteq \mathcal{V}$. Since $i_\ast A_{2} \in i_\ast \mathcal{A} \subseteq \mathcal{V}$ and  $\mathcal{V}$ is closed under cocones, we have  $K \in \mathcal{V}$. Moreover, since $i_\ast i^! V \in i_\ast \mathcal{A} \in \mathcal{V}$, it follows that $V \in \mathcal{V}$, given that the thick subcategory $\mathcal{V}$ is closed under extensions.
\end{proof}


For convenience, we denote by $\Thick\mathcal{C}$ the set of thick subcategories of $\mathcal{C}$, and by $\Thick_{i_{\ast}\mathcal{A}}\mathcal{B}$ the set of thick subcategories of $\mathcal{B}$ that contain $i_{\ast}\mathcal{A}$. Now we state our first main result of this section.

\begin{theorem}\label{main-thick-bijection}
There exist mutually inverse bijections
$$
\xymatrix@C=3.5pc{{\Thick_{i_{\ast}\mathcal{A}}\mathcal{B}}\ar@<-1ex>[r]_-{\Phi }&
{\Thick\mathcal{C} }\ar@<-1ex>[l]_-{\Psi}.        }
$$
The map $\Phi$ is defined by $\mathcal{V} \mapsto j^{\ast}\mathcal{V}$ and the inverse is given by $\Psi: \mathcal{W} \mapsto
\mathcal{V} = \{M \in \mathcal{B} \mid j^{\ast}M \in \mathcal{W}\}$.
\end{theorem}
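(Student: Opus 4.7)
The plan is to verify that both $\Phi$ and $\Psi$ are well-defined and then establish that they are mutually inverse. The main tools will be the exactness of $j^{\ast}$, the right exactness of $j_{!}$ (Definition~\ref{right exact}), the isomorphism $j^{\ast}j_{!} \cong \Id_{\mathcal{C}}$ from Lemma~\ref{CY}(1), the vanishing $j^{\ast}i_{\ast} = 0$ coming from axiom (R2) of Definition~\ref{recollement}, and the preceding Lemma~\ref{lemma 31}.

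The routine parts are the well-definedness of $\Psi$ and the two inverse identities. For $\Psi$: the inclusion $i_{\ast}\mathcal{A} \subseteq \Psi(\mathcal{W})$ is immediate because $j^{\ast}i_{\ast}\mathcal{A} = 0 \in \mathcal{W}$, and each of the four thickness closures transfers from $\mathcal{W}$ to $\Psi(\mathcal{W})$ since $j^{\ast}$ is exact and additive. For $\Psi\circ\Phi = \Id$, the inclusion $\mathcal{V} \subseteq \Psi\Phi(\mathcal{V})$ is tautological, while the reverse is precisely the content of Lemma~\ref{lemma 31}(3). For $\Phi\circ\Psi = \Id$, one inclusion is tautological, and the other uses Lemma~\ref{CY}(1): every $W \in \mathcal{W}$ satisfies $j^{\ast}j_{!}W \cong W \in \mathcal{W}$, so $j_{!}W \in \Psi(\mathcal{W})$, hence $W \cong j^{\ast}j_{!}W \in \Phi\Psi(\mathcal{W})$.

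The hard step is the well-definedness of $\Phi$, i.e., showing $j^{\ast}\mathcal{V}$ is thick in $\mathcal{C}$. Summand-closure is straightforward from Lemma~\ref{lemma 31}(2): any decomposition $j^{\ast}V \cong X \oplus Y$ yields $j_{!}X \oplus j_{!}Y \cong j_{!}j^{\ast}V \in \mathcal{V}$, so summand-closure of $\mathcal{V}$ places $j_{!}X, j_{!}Y \in \mathcal{V}$, and $X \cong j^{\ast}j_{!}X$, $Y \cong j^{\ast}j_{!}Y$ both lie in $j^{\ast}\mathcal{V}$. For closure under extensions, cones, and cocones, I plan to lift an arbitrary $\mathbb{E}_{\mathcal{C}}$-triangle $C_{1} \xrightarrow{f} C_{2} \xrightarrow{g} C_{3} \dashrightarrow$ along $j_{!}$. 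By right exactness (Definition~\ref{right exact}(2)), this produces an $\mathbb{E}_{\mathcal{B}}$-triangle $A' \xrightarrow{x} j_{!}C_{2} \xrightarrow{j_{!}g} j_{!}C_{3} \dashrightarrow$ and a compatible deflation $y\colon j_{!}C_{1} \to A'$ with $j_{!}f = xy$, hence a further $\mathbb{E}_{\mathcal{B}}$-triangle $K \to j_{!}C_{1} \xrightarrow{y} A' \dashrightarrow$. Applying the exact functor $j^{\ast}$ and comparing with the original triangle (using $j^{\ast}j_{!} \cong \Id$), the morphism $j^{\ast}y$ is forced to be an isomorphism, whence $j^{\ast}K = 0$ and $K \in \Ker(j^{\ast}) = i_{\ast}\mathcal{A} \subseteq \mathcal{V}$.

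With this lift in hand, the three axioms reduce to short chases inside $\mathcal{V}$: whenever two of $C_{1}, C_{2}, C_{3}$ belong to $j^{\ast}\mathcal{V}$, Lemma~\ref{lemma 31}(2) places the corresponding $j_{!}C_{i}$ in $\mathcal{V}$, and combining $K \in \mathcal{V}$ with the extension-, cone-, and cocone-closure of $\mathcal{V}$ through the two $\mathbb{E}_{\mathcal{B}}$-triangles above forces the remaining $j_{!}C_{k}$ into $\mathcal{V}$, so $C_{k} \cong j^{\ast}j_{!}C_{k} \in j^{\ast}\mathcal{V}$. The main obstacle throughout is exactly the identification $K \in i_{\ast}\mathcal{A}$: this is the step that makes the hypothesis $i_{\ast}\mathcal{A} \subseteq \mathcal{V}$ essential, because the failure of $j_{!}$ to be strictly exact is measured by a kernel in $\Ker(j^{\ast})$, and this kernel is absorbed into $\mathcal{V}$ precisely when $\mathcal{V}$ contains $i_{\ast}\mathcal{A}$.
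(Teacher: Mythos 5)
Your proposal is correct, and its overall architecture (well-definedness of both maps, then the two inverse identities, with the only hard point being that $j^{\ast}\mathcal{V}$ is thick) matches the paper's. The difference lies in how you perform the crucial lifting of an $\mathbb{E}_{\mathcal{C}}$-triangle to $\mathcal{B}$: the paper applies the \emph{left} exact functor $j_{\ast}$, obtaining a single auxiliary $\mathbb{E}_{\mathcal{B}}$-triangle $j_{\ast}X\to j_{\ast}Y\to Z'$ together with a compatible inflation $Z'\to j_{\ast}Z$, shows $j^{\ast}Z'\cong Z$ via the WIC condition, and then feeds $Z'$ back into Lemma~\ref{lemma 31}(3); you instead apply the \emph{right} exact functor $j_{!}$, obtaining two auxiliary triangles, and identify the defect of exactness as an object $K$ with $j^{\ast}K=0$, hence $K\in\Ker j^{\ast}=i_{\ast}\mathcal{A}\subseteq\mathcal{V}$ by (R2). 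Both mechanisms are dual instances of the same idea and both rest on WIC plus compatibility to force the comparison morphism ($j^{\ast}h_{2}$ in the paper, your $j^{\ast}y$) to be an isomorphism; your version has the merit of making the role of the hypothesis $i_{\ast}\mathcal{A}\subseteq\mathcal{V}$ completely explicit, and your summand-closure argument via $j_{!}j^{\ast}\mathcal{V}\subseteq\mathcal{V}$ is slightly cleaner than the paper's appeal to density of $j^{\ast}$. Two points you should spell out when writing this up: first, the passage from ``$j^{\ast}y$ is an isomorphism in the conflation $j^{\ast}K\to j^{\ast}j_{!}C_{1}\to j^{\ast}A'$'' to ``$j^{\ast}K=0$'' deserves a line (the deflation being split forces the extension to vanish, and an isomorphic deflation out of $j^{\ast}K\oplus j^{\ast}A'$ then forces $j^{\ast}K=0$); second, record explicitly that $j^{\ast}y$ is an inflation by WIC applied to $j^{\ast}j_{!}f=(j^{\ast}x)(j^{\ast}y)$, a deflation because $j^{\ast}$ is exact, and compatible because $j^{\ast}$ preserves compatible morphisms.
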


For a morphism $f:A\rightarrow B$, we denote by $\Phi_{f}$ the set consisting of all pairs $(h_{1},h_{2})$ such that $h_{1}:B\rightarrow K$ is a deflation, $h_{2}:K\rightarrow C$ is an inflation and $f=h_{2}h_{1}$.

\begin{lemma}\label{lemma 32}
If $\mathcal{V}\in\Thick_{i_{\ast}\mathcal{A}}\mathcal{B}$, then $\Phi(\mathcal{V})\in\Thick\mathcal{C}$.
\end{lemma}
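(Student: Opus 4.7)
The proof amounts to verifying that $j^{\ast}\mathcal{V}$ satisfies the four defining closure properties of a thick subcategory: extensions, cones, cocones, and direct summands. The key preliminary observation I would record is the equivalence
\[
X\in j^{\ast}\mathcal{V}\ \Longleftrightarrow\ j_{!}X\in\mathcal{V}\ \Longleftrightarrow\ j_{\ast}X\in\mathcal{V},\quad X\in\mathcal{C},
\]
which follows from the natural isomorphisms $j^{\ast}j_{!}\cong\Id_{\mathcal{C}}\cong j^{\ast}j_{\ast}$ of Lemma~\ref{CY}(1) together with the inclusions $j_{!}j^{\ast}\mathcal{V},\, j_{\ast}j^{\ast}\mathcal{V}\subseteq\mathcal{V}$ of Lemma~\ref{lemma 31}(2). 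This reformulation transfers every closure question for $j^{\ast}\mathcal{V}$ in $\mathcal{C}$ to the corresponding closure property of $\mathcal{V}$ applied to $j_{!}X$ (or $j_{\ast}X$). Closure under direct summands is then immediate from additivity of $j_{\ast}$.

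For the three conflation-based closure properties, given an $\mathbb{E}_{\mathcal{C}}$-triangle $X_{1}\stackrel{\alpha}{\longrightarrow}X_{2}\stackrel{\beta}{\longrightarrow}X_{3}\dashrightarrow$ with two of the three objects in $j^{\ast}\mathcal{V}$, I would apply $j_{!}$ for the extension and cone cases and $j_{\ast}$ for the cocone case. Right exactness of $j_{!}$ (Definition~\ref{right exact}) yields an $\mathbb{E}_{\mathcal{B}}$-triangle $K\stackrel{x}{\longrightarrow}j_{!}X_{2}\stackrel{j_{!}\beta}{\longrightarrow}j_{!}X_{3}\dashrightarrow$ and a compatible deflation $y:j_{!}X_{1}\to K$ satisfying $j_{!}\alpha=xy$; let $N\to j_{!}X_{1}\stackrel{y}{\longrightarrow}K\dashrightarrow$ be the $\mathbb{E}_{\mathcal{B}}$-triangle realizing $y$. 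The dual construction via $j_{\ast}$ produces an analogous diagram with a compatible inflation and error term $N'$.

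The main technical obstacle is to identify $N$ (respectively $N'$) as lying in $\Im i_{\ast}\subseteq\mathcal{V}$. Applying the exact functor $j^{\ast}$ gives $\mathbb{E}_{\mathcal{C}}$-triangles $j^{\ast}N\to X_{1}\stackrel{j^{\ast}y}{\longrightarrow}j^{\ast}K\dashrightarrow$ and $j^{\ast}K\stackrel{j^{\ast}x}{\longrightarrow}X_{2}\stackrel{\beta}{\longrightarrow}X_{3}\dashrightarrow$, so $j^{\ast}y$ is a deflation. The identity $\alpha=(j^{\ast}x)(j^{\ast}y)$ together with the (WIC) condition then forces $j^{\ast}y$ to be an inflation as well. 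Since $j^{\ast}y$ is compatible (Definition~\ref{exact functor}(1) applied to the compatible morphism $y$), it must be an isomorphism, so $j^{\ast}N=0$ and hence $N\in\Ker j^{\ast}=\Im i_{\ast}\subseteq\mathcal{V}$ by (R2) and the standing hypothesis $i_{\ast}\mathcal{A}\subseteq\mathcal{V}$.

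With $N\in\mathcal{V}$ in hand, two applications of thickness of $\mathcal{V}$ conclude the argument: the triangle $N\to j_{!}X_{1}\to K\dashrightarrow$ yields $K\in\mathcal{V}$ (closure under cones), and the triangle $K\to j_{!}X_{2}\to j_{!}X_{3}\dashrightarrow$ then places the remaining object of interest in $\mathcal{V}$ (closure under extensions or cones, as needed), whereupon the preliminary equivalence returns the corresponding $X_{i}$ to $j^{\ast}\mathcal{V}$. The cocone case runs entirely analogously via the left exact functor $j_{\ast}$, with the dual part of (WIC) supplying the vanishing $j^{\ast}N'=0$.
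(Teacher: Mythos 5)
Your proof is correct, and its overall strategy coincides with the paper's: transport the $\mathbb{E}_{\mathcal{C}}$-triangle into $\mathcal{B}$ via an adjoint of $j^{\ast}$, control the resulting error term, apply thickness of $\mathcal{V}$, and return via $j^{\ast}j_{!}\cong\Id_{\mathcal{C}}\cong j^{\ast}j_{\ast}$. The differences lie in the mechanics and are worth recording. The paper uses only the left exact functor $j_{\ast}$ for all three conflation closures, and it disposes of the error term $Z'$ by first proving $j^{\ast}Z'\cong Z$ (the same WIC computation you carry out) and then invoking Lemma~\ref{lemma 31}(3), whose proof rests on the (R4) decomposition. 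You instead use $j_{!}$ for extensions and cones and $j_{\ast}$ for cocones, and you dispose of the error term $N$ by exhibiting it directly as an object of $\Ker j^{\ast}=\Im i_{\ast}\subseteq\mathcal{V}$ via (R2); this is a somewhat cleaner mechanism that bypasses Lemma~\ref{lemma 31}(3) and axiom (R4) for the conflation closures, though you still rely on Lemma~\ref{lemma 31}(2) for your preliminary equivalence $X\in j^{\ast}\mathcal{V}\Leftrightarrow j_{!}X\in\mathcal{V}\Leftrightarrow j_{\ast}X\in\mathcal{V}$. Your treatment of direct summands via $j_{\ast}(M_{1}\oplus M_{2})\cong j_{\ast}M_{1}\oplus j_{\ast}M_{2}$ is also marginally more economical than the paper's, which appeals to density of $j^{\ast}$ and again to Lemma~\ref{lemma 31}(3). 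Two cosmetic points: the factorization $\alpha=(j^{\ast}x)(j^{\ast}y)$ implicitly identifies $X_{1}$ and $X_{2}$ with $j^{\ast}j_{!}X_{1}$ and $j^{\ast}j_{!}X_{2}$ via the unit isomorphism, which is harmless but should be said; and both halves of the factor-through condition you use (``$gf$ an inflation forces $f$ an inflation'' and ``$gf$ a deflation forces $g$ a deflation'') are part of the single standing hypothesis (WIC), so calling one the ``dual part'' is only a matter of phrasing.
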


\begin{proof} Consider an $\mathbb{E}_{\C}$-triangle $X\stackrel{f}{\longrightarrow}Y\stackrel{g}{\longrightarrow}Z\stackrel{}\dashrightarrow$ in $\mathcal{C}$. Since the functor $j_{\ast}$ is left exact, there exists an $\mathbb{E}_{\B}$-triangle
\begin{equation}\label{three3}
\xymatrix{
j_{\ast}X\stackrel{j_{\ast}f}{\longrightarrow}j_{\ast}Y\stackrel{h_{1}}{\longrightarrow} Z'\stackrel{}\dashrightarrow}
\end{equation}
and a commutative diagram
\begin{equation*}
\xymatrix{
   &   & j_{\ast}Z & \\
j_{\ast}X\ar[r]^{j_{\ast}f}  & j_{\ast}Y \ar[ur]^{j_{\ast}g }\ar[r]^{h_{1}} & Z'\ar[u]_{h_{2}}  \ar@{-->}[r]^{ } & }
\end{equation*}
where $h_{2}:Z'\rightarrow j_{\ast}Z$ is a compatible inflation such that $(h_{1},h_{2})\in\Phi_{ j_{\ast}g}$. We claim that $Z\cong j^\ast Z'$.
By Lemma \ref{CY}(1), $j^{\ast}j_{\ast}\cong \Id_{\C}$, then
$j^{\ast}j_{\ast}X\stackrel{j^{\ast}j_{\ast}f}{\longrightarrow}j^{\ast}j_{\ast}Y \stackrel {j^{\ast}j_{\ast}g}{\longrightarrow}j^{\ast}j_{\ast}Z\stackrel{}\dashrightarrow$ is also an $\mathbb{E}_{\C}$-triangle. Since $j^\ast j_\ast g=(j^\ast h_2)(j^\ast h_1)$ is a deflation and by Condition \ref{WIC}, we obtain that $j^{\ast}h_{2}$ is a deflation. Consequently, $j^{\ast}h_{2}$ is an isomorphism since $j^{\ast}h_{2}:j^\ast Z'\rightarrow j^*j_\ast Z$ is an inflation and compatible. Hence, we have $ j^\ast Z'\cong  j^*j_\ast Z \cong Z$.

Now, we prove that $\Phi(\mathcal{V}) = j^{\ast}\mathcal{V}$ is closed under extensions, cones and cocones. Assume that $X \in j^{\ast}\mathcal{V}$ and $Z \in j^{\ast}\mathcal{V}$. It follows from Lemma \ref{lemma 31}(2) that $j_{\ast}X \in j_{\ast}j^{\ast}\mathcal{V} \subseteq \mathcal{V}$. Since $j^\ast Z' \cong Z \in j^{\ast}\mathcal{V}$, by Lemma \ref{lemma 31}(3), we conclude that $Z' \in \mathcal{V}$.
Then the $\mathbb{E}_{\B}$-triangle \ref{three3} satisfying $j_{\ast}X \in \mathcal{V}$ and $Z' \in \mathcal{V}$.
Therefore, $j_{\ast}Y \in \mathcal{V}$ because the thick subcategory $\mathcal{V}$ is closed under extensions. By Lemma \ref{CY}(1), we have $Y \cong j^\ast j_{\ast} Y \in j^{\ast}\mathcal{V}$, which implies that $j^{\ast}\mathcal{V}$ is closed under extensions.

Assume that $X \in j^{\ast}\mathcal{V}$ and $Y \in j^{\ast}\mathcal{V}$. By Lemma \ref{lemma 31}(2), we have $j_{\ast}X \in j_{\ast}j^{\ast}\mathcal{V} \subseteq \mathcal{V}$ and $j_{\ast}Y \in j_{\ast}j^{\ast}\mathcal{V} \subseteq \mathcal{V}$. Therefore, $Z' \in \mathcal{V}$ because $\mathcal{V}$ is closed under cones. Thus, we conclude that $Z \cong j^\ast Z' \in j^{\ast}\mathcal{V}$, which implies that $j^{\ast}\mathcal{V}$ is closed under cones.

Assume that $Y \in j^{\ast}\mathcal{V}$ and $Z \in j^{\ast}\mathcal{V}$. Since $j^\ast Z' \cong Z \in j^{\ast}\mathcal{V}$, by Lemma \ref{lemma 31}(3), we conclude that $Z' \in \mathcal{V}$. By Lemma \ref{lemma 31}(2), we have $j_{\ast}Y \in j_{\ast}j^{\ast}\mathcal{V} \subseteq \mathcal{V}$.
Then the $\mathbb{E}_{\B}$-triangle (\ref{three3}) satisfying $j_{\ast}Y \in \mathcal{V}$ and $Z' \in \mathcal{V}$.
Therefore, $j_{\ast}X \in \mathcal{V}$ because the thick subcategory $\mathcal{V}$ is closed under cocones. By Lemma \ref{CY}(1), we have $X \cong j^\ast j_{\ast} X \in j^{\ast}\mathcal{V}$, which implies that $j^{\ast}\mathcal{V}$ is closed under cocones.

It remains to show that $j^{\ast}\mathcal{V}$ is closed under direct summands. Let $M = M_{1} \oplus M_{2} \in j^{\ast}\mathcal{V}$. Since $j^{\ast}$ is dense, there exist objects $B_{1}, B_{2}$ in $\mathcal{B}$ such that $M_{1} \cong j^{\ast}B_{1}$ and $M_{2} \cong j^{\ast}B_{2}$. Observe that
$j^{\ast}(B_{1} \oplus B_{2}) \cong j^{\ast}B_{1} \oplus j^{\ast}B_{2} \cong M_{1} \oplus M_{2} = M$
belongs to $j^{\ast}\mathcal{V}$. Then Lemma \ref{lemma 31}(3) implies that $B_{1} \oplus B_{2}\in\mathcal{V}$. Since  $\mathcal{V}$ is closed under direct summands, it follows that $B_{1}$ and $B_{2}\in\mathcal{V}$. Therefore, we have $j^{\ast}B_{1}, j^{\ast}B_{2} \in j^{\ast}\mathcal{V}$ and  conclude that $j^{\ast}\mathcal{V}$ is closed under direct summands.
\end{proof}


\begin{lemma}\label{prop 33} If $\mathcal{W}\in\Thick\mathcal{C}$, then $\Psi(\mathcal{W})\in\Thick_{i_{\ast}\mathcal{A}}\mathcal{B}$.
\end{lemma}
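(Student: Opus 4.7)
The plan is to verify directly the four closure properties defining a thick subcategory, together with the containment $i_{\ast}\mathcal{A} \subseteq \Psi(\mathcal{W})$. The crucial input is that $j^{\ast}$ is an exact functor (one of the two exact functors built into the recollement data in Definition \ref{recollement}), together with the vanishing $j^{\ast}i_{\ast} = 0$ (immediate from $\Im i_{\ast} = \Ker j^{\ast}$ in (R2)). So, in stark contrast with Lemma \ref{lemma 32}, nothing has to be transported back and forth between $\mathcal{B}$ and $\mathcal{C}$ via Lemma \ref{lemma 31}; the exactness of $j^{\ast}$ pushes $\mathbb{E}_{\mathcal{B}}$-triangles to $\mathbb{E}_{\mathcal{C}}$-triangles directly, and everything reduces to the thickness of $\mathcal{W}$.

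First I would handle the containment $i_{\ast}\mathcal{A} \subseteq \Psi(\mathcal{W})$: for any $A \in \mathcal{A}$, (R2) gives $j^{\ast}i_{\ast}A = 0$, which lies in the thick subcategory $\mathcal{W}$ since any thick subcategory contains $0$ (as a direct summand of any of its objects). Hence $i_{\ast}A \in \Psi(\mathcal{W})$ by the definition of $\Psi$.

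Next, for closure under extensions, cones, and cocones, I would take an arbitrary $\mathbb{E}_{\mathcal{B}}$-triangle $X \longrightarrow Y \longrightarrow Z \dashrightarrow$ and apply the exact functor $j^{\ast}$ to obtain the $\mathbb{E}_{\mathcal{C}}$-triangle $j^{\ast}X \longrightarrow j^{\ast}Y \longrightarrow j^{\ast}Z \dashrightarrow$. If any two of $X, Y, Z$ belong to $\Psi(\mathcal{W})$, then the corresponding two of $j^{\ast}X, j^{\ast}Y, j^{\ast}Z$ lie in $\mathcal{W}$, so by the thickness of $\mathcal{W}$ (closed under extensions, cones, cocones respectively) the third also lies in $\mathcal{W}$, which by definition of $\Psi(\mathcal{W})$ places the third object of the original triangle in $\Psi(\mathcal{W})$.

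Finally, for closure under direct summands, suppose $M = M_{1} \oplus M_{2} \in \Psi(\mathcal{W})$. Since $j^{\ast}$ is additive, $j^{\ast}M_{1} \oplus j^{\ast}M_{2} \cong j^{\ast}(M_{1} \oplus M_{2}) \in \mathcal{W}$, and $\mathcal{W}$ being closed under direct summands gives $j^{\ast}M_{i} \in \mathcal{W}$ for $i = 1, 2$, hence $M_{1}, M_{2} \in \Psi(\mathcal{W})$. I do not anticipate any genuine obstacle in this proof: because $j^{\ast}$ preserves $\mathbb{E}$-triangles directly, the entire argument is formal and the hypothesis that $\mathcal{W}$ is thick transports verbatim to $\Psi(\mathcal{W})$.
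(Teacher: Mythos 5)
Your proof is correct and follows essentially the same route as the paper's: apply the exact functor $j^{\ast}$ to an $\mathbb{E}_{\mathcal{B}}$-triangle, use the thickness of $\mathcal{W}$ in $\mathcal{C}$, and handle direct summands via additivity of $j^{\ast}$. In fact you are slightly more complete, since you explicitly check the containment $i_{\ast}\mathcal{A}\subseteq\Psi(\mathcal{W})$ via (R2), a point the paper's proof leaves implicit.
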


\begin{proof}
Consider an $\mathbb{E}_{\B}$-triangle $X\stackrel{f}{\longrightarrow}Y\stackrel{g}{\longrightarrow}Z\stackrel{}\dashrightarrow$ in $\mathcal{B}$. Since $j^{\ast}$ is exact, we obtain an $\mathbb{E}_{\C}$-triangle $j^{\ast}X\stackrel{j^{\ast}f}{\longrightarrow}j^{\ast}Y\stackrel{j^{\ast}g}{\longrightarrow}j^{\ast}Z\stackrel{}\dashrightarrow$. Assume that $X, Z \in \Psi(\mathcal{W})$, then $j^{\ast}X, j^{\ast}Z \in \mathcal{W}$.  Since $\mathcal{W}$ is closed under extensions, we have that $j^{\ast}Y\in\mathcal{W}$ and hence $Y\in\mathcal{V}$. It means that $\mathcal{V}$ is closed under extensions. The proof that $\mathcal{V}$ is closed under cones and cocones follows similarly. We still need to show that $\mathcal{V}$ is closed under direct summands. For $M = M_{1} \oplus M_{2} \in \Psi(\mathcal{W})$, we have $j^{\ast}M \in \mathcal{W}$ and hence $j^{\ast}M_{1} \oplus j^{\ast}M_{2} \cong j^{\ast}(M_{1} \oplus M_{2}) = j^{\ast}M\in\mathcal{W}$. Since $\mathcal{W}$ is closed under direct summands, it follows that $j^{\ast}M_{1},j^{\ast}M_{2}\in\mathcal{W}$. This implies that $M_{1}$ and $M_{2}$ belong to $\mathcal{V}$.
\end{proof}

Now we are in the position to prove Theorem \ref{main-thick-bijection}.


\emph{Proof of Theorem \ref{main-thick-bijection}.}
From Lemma \ref{lemma 32}, we conclude that $\Phi$ is well-defined, and from Lemma \ref{prop 33}, we see that $\Psi$ is well-defined as well. It remains to show that the maps $\Phi$ and $\Psi$ are inverses of each other. Assume that $\mathcal{W} \in \Thick\mathcal{C}$. Recall that $\Psi(\mathcal{W})=\{M \in \mathcal{B} \mid j^{\ast}M \in \mathcal{W}\}$. It is evident that $\Phi\Psi(\mathcal{W})= j^{\ast}\Psi(\mathcal{W}) \subseteq \mathcal{W}$. On the other hand, for any $N \in \mathcal{W}$, by Lemma \ref{CY}(1), we have $j^{\ast}j_{\ast}N \cong N \in \mathcal{W}$. This implies that $j_{\ast}N\in\Psi(\mathcal{W})$, and therefore $N \in \Phi\Psi(\mathcal{W})$. Conversely, for $\mathcal{V} \in \Thick_{i_{\ast}\mathcal{A}}\mathcal{B}$, we have $\Psi\Phi(\mathcal{V})=\{M \in \mathcal{B} \mid j^{\ast}M \in j^{\ast}\mathcal{V}\}$. The inclusion $V\subseteq\Psi\Phi(\mathcal{V})$ is obvious. The inclusion $\Psi\Phi(\mathcal{V})\subseteq V$ immediately follows from Lemma \ref{lemma 31}(3). We complete the proof.
 \fin\\

We provide an example illustrating Theorem \ref{main-thick-bijection}.

\begin{example}\label{fang}
Let $A$ be the path algebra of the quiver $1\stackrel{\alpha}\longrightarrow2$ over a field.
Then the triangular matrix algebra $B=\begin{pmatrix} A &A\\0  &A\end{pmatrix}$ is given by the quiver
$$\xymatrix{
   &   \cdot\ar[dr]^{\beta} &  &  \\
   \cdot\ar[ur]^{\alpha}\ar[dr]_{\delta} &    &  \cdot& \\
    &  \cdot \ar[ur]_{\gamma} &  & \\  }
$$
with the relation $\beta\alpha=\gamma\delta$. Each $B$-module can be written as a triple ${X\choose Y}_f$ such that $f:Y\rightarrow X$ is a homomorphism of $A$-modules. For convience, we write ${X\choose Y}$ instead of ${X\choose Y}_0$. The Auslander-Reiten quiver of $B$ is given by:
\begin{equation*}
\xymatrix@!=3.0pc{    & P_1\choose0\ar[dr] &   & 0\choose S_2 \ar[dr] &      & {S_1\choose S_1}_{1}\ar[dr]   &   \\
S_2\choose 0 \ar[dr]\ar[ur]&    & {P_1\choose S_2}_{\varphi}\ar[dr]\ar[ur]\ar[r]  & {P_1\choose P_1}_{1}\ar[r]  &  {S_1\choose P_1}_{\psi}\ar[ur]\ar[dr]    &    & 0\choose S_1  \\
 &  {S_2\choose S_2}_{1}\ar[ur]  &&  S_1\choose 0 \ar[ur]      &   &0\choose P_1\ar[ur] }
\end{equation*}

(1) By \cite[Example 2.12]{Ps}, we have a recollement of abelian categories
\begin{equation*}
  \xymatrix{\mod A\ar[rr]|{i_{*}}&&\ar@/_1pc/[ll]|{i^{*}}\ar@/^1pc/[ll]|{i^{!}}\mod B
\ar[rr]|{j^{\ast}}&&\ar@/_1pc/[ll]|{j_{!}}\ar@/^1pc/[ll]|{j_{\ast}} \mod A}
\end{equation*}
such that $i^{*}({X\choose Y}_{f})=\rm Coker$$(f)$, $i_{*}(X)={X\choose 0}$, $i^{!}({X\choose Y}_{f})=X$, $j_{!}(Y)={Y\choose Y}_{1}$, $j^{\ast}({X\choose Y}_{f})=Y$ and  $j_{*}(Y)={0\choose Y}$. By Theorem \ref{main-thick-bijection}, there exist a bijection between thick subcategories in $\mod A$ and thick subcategories in $\mod B$ containing
$$i_{\ast}\mod A=\add({S_2\choose0}, {P_1\choose0}, {{S_1}\choose0}).$$
Let us list all thick subcategories $\mathcal{V}$ in $\mod B$ and the corresponding thick subcategories $\Phi(\mathcal{V})$ in $\mod A$ as follows:
$$
\begin{tabular}{|p{9.5cm}|p{4.5cm}|}
\hline
$\mathcal{V}$ & $\Phi(\mathcal{V})$\\
 \hline
$\add({S_2\choose0}\oplus{P_1\choose 0}\oplus{S_1\choose 0})$ & 0\\
 \hline
$\add({S_2\choose0}\oplus{P_1\choose 0}\oplus{S_1\choose 0}\oplus{P_1\choose S_2}_{\varphi}\oplus{S_2\choose S_2}_{1}\oplus {0\choose S_2})$ & $\add({S_2})$\\
 \hline
$\add({S_2\choose0}\oplus{P_1\choose 0}\oplus{S_1\choose 0}\oplus{S_1\choose S_1}_{1}\oplus {0\choose S_1})$ & $\add({S_1})$\\
 \hline
$\add({S_2\choose0}\oplus{P_1\choose 0}\oplus{S_1\choose 0}\oplus{P_1\choose P_1}_{1}\oplus{S_1\choose P_1}_{\psi}\oplus{0\choose P_1})$ & $\add({P_1})$\\
 \hline
$\mod B$ & $\mod A$\\
 \hline
\end{tabular}
$$
\end{example}
%


\bigskip

In the remainder of this section, we show that the thick subcategories $\mathcal{V}$ in $\mathcal{B}$ containing $i_{\ast}\mathcal{A}$ can induce a new recollement.

\begin{lemma}\label{middle to left}
 Let $\mathcal{V}$ be a thick subcategory of $\mathcal{B}$ such that $i_{\ast}i^{\ast}\mathcal{V} \subseteq \mathcal{V}$ (resp. $i_{\ast}i^{!}\mathcal{V}\subseteq \mathcal{V}$). Then $i^{\ast}\mathcal{V}$ (resp. $i^{!}\mathcal{V}$) is a thick subcategory of $\mathcal{A}$.
\end{lemma}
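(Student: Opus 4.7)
The plan is to reduce thickness of $i^{\ast}\mathcal{V}$ in $\A$ to thickness of $\mathcal{V}$ in $\B$ by using $i_{\ast}$ to transport $\mathbb{E}_{\A}$-triangles up to $\B$, applying the hypothesis $i_{\ast}i^{\ast}\mathcal{V}\subseteq\mathcal{V}$ to land in $\mathcal{V}$, closing under the relevant operation inside the thick subcategory $\mathcal{V}$, and then returning to $\A$ via the natural isomorphism $i^{\ast}i_{\ast}\cong \Id_{\A}$ from Lemma \ref{CY}(1).

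First I would fix an $\mathbb{E}_{\A}$-triangle $X\to Y\to Z\dashrightarrow$ and handle the three closure-under-conflation conditions uniformly. Since $i_{\ast}$ is exact (recollement axiom), applying $i_{\ast}$ yields an $\mathbb{E}_{\B}$-triangle $i_{\ast}X\to i_{\ast}Y\to i_{\ast}Z\dashrightarrow$. If two of the objects $X,Y,Z$ lie in $i^{\ast}\mathcal{V}$, say $X\cong i^{\ast}V_{1}$ and $Y\cong i^{\ast}V_{2}$ with $V_{1},V_{2}\in\mathcal{V}$, then by hypothesis $i_{\ast}X\cong i_{\ast}i^{\ast}V_{1}\in i_{\ast}i^{\ast}\mathcal{V}\subseteq\mathcal{V}$ and similarly $i_{\ast}Y\in\mathcal{V}$. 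The corresponding closure property of $\mathcal{V}$ (cones in this example, and dually extensions/cocones in the other two) forces $i_{\ast}Z\in\mathcal{V}$, and Lemma \ref{CY}(1) then gives $Z\cong i^{\ast}i_{\ast}Z\in i^{\ast}\mathcal{V}$, as desired.

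For closure under direct summands, suppose $M=M_{1}\oplus M_{2}\in i^{\ast}\mathcal{V}$, so $M\cong i^{\ast}V$ for some $V\in\mathcal{V}$. Then $i_{\ast}M_{1}\oplus i_{\ast}M_{2}\cong i_{\ast}M\cong i_{\ast}i^{\ast}V\in\mathcal{V}$ by the hypothesis, and since $\mathcal{V}$ is closed under direct summands we obtain $i_{\ast}M_{k}\in\mathcal{V}$ for $k=1,2$. Applying $i^{\ast}$ and using $i^{\ast}i_{\ast}\cong\Id_{\A}$ again gives $M_{k}\in i^{\ast}\mathcal{V}$.

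The argument for $i^{!}\mathcal{V}$ is formally dual: replace $i^{\ast}$ by $i^{!}$ everywhere, use the natural isomorphism $\Id_{\A}\cong i^{!}i_{\ast}$ from Lemma \ref{CY}(1), and use the exactness of $i_{\ast}$ together with the hypothesis $i_{\ast}i^{!}\mathcal{V}\subseteq\mathcal{V}$. No genuine obstacle arises; the only point requiring care is to remember that we must not transport $\mathbb{E}_{\A}$-triangles down through the possibly non-exact functor $i^{\ast}$, and instead always push $\mathbb{E}_{\A}$-triangles up via the exact functor $i_{\ast}$ and exploit the hypothesis that brings us back inside $\mathcal{V}$.
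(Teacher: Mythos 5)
Your proposal is correct and follows essentially the same route as the paper's proof: push the $\mathbb{E}_{\mathcal{A}}$-triangle into $\mathcal{B}$ via the exact functor $i_{\ast}$, use the hypothesis $i_{\ast}i^{\ast}\mathcal{V}\subseteq\mathcal{V}$ together with the closure properties of the thick subcategory $\mathcal{V}$, and return to $\mathcal{A}$ via $i^{\ast}i_{\ast}\cong\Id_{\mathcal{A}}$, with the identical argument for direct summands. No gaps.
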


\begin{proof} We prove only the first statement; the second one can be shown similarly.

Consider an $\mathbb{E}_{\A}$-triangle $X\stackrel{f}{\longrightarrow}Y\stackrel{g}{\longrightarrow}Z\stackrel{}\dashrightarrow$ in $\mathcal{A}$ where $X, Z \in i^{\ast}\mathcal{V}$. Since $i_{\ast}$ is exact, we obtain an $\mathbb{E}_{\B}$-triangle $i_{\ast}X\stackrel{f}{\longrightarrow}i_{\ast}Y\stackrel{g}{\longrightarrow}i_{\ast}Z\stackrel{}\dashrightarrow$. By hypothesis, we have that $i_{\ast}X, i_{\ast}Z \in i_{\ast}i^{\ast}\mathcal{V} \subseteq \mathcal{V}$. Since  $\mathcal{V}$ is closed under extensions, it follows that $i_{\ast}Y \in \mathcal{V}$. Thus, we conclude that $Y \cong i^\ast i_{\ast} Y \in i^{\ast}\mathcal{V}$. This implies that $i^{\ast}\mathcal{V}$ is closed under extensions. The proof that $i^{\ast}\mathcal{V}$ is closed under cones and cocones follows similarly. Now, we show that $i^{\ast}\mathcal{V}$ is closed under direct summands. Let $M = M_{1} \oplus M_{2} \in i^{\ast}\mathcal{V}$. It is evident that $i_{\ast}M_{1} \oplus i_{\ast}M_{2} \cong i_{\ast}(M_{1} \oplus M_{2}) = i_{\ast}M \in i_{\ast}i^{\ast}\mathcal{V} \subseteq \mathcal{V}$. Since the thick subcategory $\mathcal{V}$ is closed under direct summands, it follows that $i_{\ast}M_{1}$ and $i_{\ast}M_{2}$ belong to $\mathcal{V}$. By Lemma \ref{CY}(1), we have $M_{1} \cong i^{\ast}i_{\ast}M_{1}$ and $M_{2} \cong i^{\ast}i_{\ast}M_{2}$, which implies that both $M_{1}$ and $M_{2}$ belong to $i^{\ast}\mathcal{V}$. Then $i^{\ast}\mathcal{V}$ is closed under direct summands and hence $i^{\ast}\mathcal{V}$ is a thick subcategory of $\mathcal{A}$.
\end{proof}

\begin{remark} \label{remark}
Notice that $i_{\ast}i^{\ast}\mathcal{V} \subseteq \mathcal{V}$ (resp. $i_{\ast}i^{!}\mathcal{V}\subseteq \mathcal{V}$) is weaker than  $i_{\ast}\mathcal{A}\subseteq \mathcal{V}$.
In fact, $i_{\ast}\mathcal{A}\subseteq \mathcal{V}$ implies that $i_{\ast}i^{\ast}\mathcal{V}\subseteq i_{\ast}\mathcal{A}\subseteq\mathcal{V}$ and $i_{\ast}i^{!}\mathcal{V}\subseteq i_{\ast}\mathcal{A}\subseteq \mathcal{V}$.
By Lemma \ref{middle to left}, we have that $i^{\ast}\mathcal{V}$ and $i^{!}\mathcal{V}$ are thick subcategories of $\mathcal{A}$.
Since $i_{\ast}\mathcal{A} \subseteq \mathcal{V}$, by Lemma \ref{CY}(1), then we have $\mathcal{A} \cong i^{\ast}i_{\ast}\mathcal{A} \subseteq i^{\ast}\mathcal{V}$ and $\mathcal{A} \cong i^{!}i_{\ast}\mathcal{A} \subseteq i^{!}\mathcal{V}$. Therefore, $i^{\ast}\mathcal{V} = i^{!}\mathcal{V} = \mathcal{A}$ holds.

\end{remark}
%

\begin{lemma}\label{adjoint-restrict}
Let $(F, G)$ be an adjoint pair from $\mathcal{A}$ to $\mathcal{B}$. If $\mathcal{D}$ is a subcategory of $\mathcal{A}$ satisfying $GF(\mathcal{D}) \subseteq \mathcal{D}$, then the restricted functors $(\overline{F}, \overline{G}) = (F|_{\mathcal{D}}, G|_{F(\mathcal{D})})$ is an adjoint pair from $\mathcal{D}$ to $F(\mathcal{D})$. Dually, if $\mathcal{D'}$ is a subcategory of $\mathcal{B}$ satisfying $FG(\mathcal{D'}) \subseteq \mathcal{D'}$, then the restricted functors $(\overline{F}, \overline{G}) = (F|_{G(\mathcal{D'})}, G|_{\mathcal{D'}})$ is an adjoint pair from $G(\mathcal{D'})$ to $\mathcal{D'}$.
\end{lemma}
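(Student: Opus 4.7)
The plan is to verify the adjunction isomorphism for the restricted functors directly, relying on the convention (stated in the introduction) that subcategories are full, together with the hypothesis of the lemma, which serves only to ensure that the codomain of one of the restricted functors is as claimed. The whole argument is purely categorical and does not use any extriangulated structure.

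First I would check that $\overline{F}$ and $\overline{G}$ are well-defined functors $\overline{F}\colon\mathcal{D}\rightarrow F(\mathcal{D})$ and $\overline{G}\colon F(\mathcal{D})\rightarrow\mathcal{D}$. The codomain of $\overline{F}$ is immediate from the definition of $F(\mathcal{D})$. For $\overline{G}$, the image $G(F(\mathcal{D}))$ lies in $\mathcal{D}$ precisely because of the hypothesis $GF(\mathcal{D})\subseteq\mathcal{D}$; on morphisms, $\overline{G}$ is just the action of $G$, which is well-defined since $F(\mathcal{D})$ is a full subcategory of $\mathcal{B}$ and $\mathcal{D}$ is a full subcategory of $\mathcal{A}$. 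Additivity of $\overline{F}$ and $\overline{G}$ is inherited from that of $F$ and $G$.

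Next, for any $D\in\mathcal{D}$ and any $B\in F(\mathcal{D})$, fullness of the subcategories gives
$$\Hom_{F(\mathcal{D})}(\overline{F}D,B)=\Hom_{\mathcal{B}}(FD,B)\quad\text{and}\quad \Hom_{\mathcal{D}}(D,\overline{G}B)=\Hom_{\mathcal{A}}(D,GB).$$
The original adjunction supplies a natural isomorphism $\Hom_{\mathcal{B}}(FD,B)\cong\Hom_{\mathcal{A}}(D,GB)$, and naturality in both variables is inherited by restriction. Hence $(\overline{F},\overline{G})$ is an adjoint pair between $\mathcal{D}$ and $F(\mathcal{D})$. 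The dual statement is symmetric: the hypothesis $FG(\mathcal{D}')\subseteq\mathcal{D}'$ is exactly what makes $F|_{G(\mathcal{D}')}$ land in $\mathcal{D}'$, and the same restriction of the adjunction isomorphism delivers the result.

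There is no real obstacle; the only point requiring attention is the careful bookkeeping of codomains, which is where the assumptions $GF(\mathcal{D})\subseteq\mathcal{D}$ and $FG(\mathcal{D}')\subseteq\mathcal{D}'$ are invoked. Once the restricted functors have been verified to have the correct target categories, the adjunction itself is a formal consequence of fullness of the subcategories.
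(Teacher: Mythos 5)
Your proposal is correct and follows essentially the same route as the paper: both arguments use fullness of the subcategories to identify the restricted Hom-sets with the ambient ones and then restrict the original adjunction isomorphism, with the hypothesis $GF(\mathcal{D})\subseteq\mathcal{D}$ (resp. $FG(\mathcal{D}')\subseteq\mathcal{D}'$) serving only to ensure the restricted right (resp. left) adjoint lands in the correct subcategory. Your additional remarks on well-definedness of the restricted functors are a harmless elaboration of what the paper leaves implicit.
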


\begin{proof}
For any $D_{1}, D_{2} \in \mathcal{D}$, by the definition of the functors $\overline{F}$ and $\overline{G}$, we have
$$\Hom_{F(\mathcal{D})}(\overline{F}(D_{1}), F(D_{2})) = \Hom_{\mathcal{B}}(F(D_{1}), F(D_{2})).$$
Since $(F, G)$ is an adjoint pair from $\mathcal{A}$ to $\mathcal{B}$, we have the following bijection:
$$\Hom_{\mathcal{B}}(F(D_{1}), F(D_{2})) \cong \Hom_{\mathcal{A}}(D_{1}, GF(D_{2})).$$
Noting that $GF(\mathcal{D}) \subseteq \mathcal{D}$, we obtain:
$$\Hom_{\mathcal{A}}(D_{1}, GF(D_{2})) = \Hom_{\mathcal{D}}(D_{1}, GF(D_{2})),$$
which is functorial in $D_{1}$ and $F(D_{2})$. Therefore, $(\overline{F}, \overline{G})$ is an adjoint pair from $\mathcal{D}$ to $F(\mathcal{D})$. Similarly, we can prove $(\overline{F}, \overline{G})$ is an adjoint pair from $G(\mathcal{D'})$ to $\mathcal{D'}$.
\end{proof}

\begin{proposition}\label{main-thick} If $\mathcal{V}\in\Thick_{i_{\ast}\mathcal{A}}\mathcal{B}$, then $\mathcal{V}$ induce a recollement as follows:
\begin{equation*}
  \xymatrix{\mathcal{A}\ar[rr]|{\overline{i_{*}}}&&\ar@/_1pc/[ll]|{\overline{i^{*}}}\ar@/^1pc/[ll]|{\overline{i^{!}}}\mathcal{V}
\ar[rr]|{\overline{j^{\ast}}}&&\ar@/_1pc/[ll]|{\overline{j_{!}}}\ar@/^1pc/[ll]|{\overline{j_{\ast}}}j^{\ast}\mathcal{V}}
\end{equation*}
\end{proposition}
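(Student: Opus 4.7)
The plan is to equip both $\mathcal{V}$ and $j^{\ast}\mathcal{V}$ with extriangulated structures inherited from $\mathcal{B}$ and $\mathcal{C}$ respectively, check that each of the six functors in the original recollement restricts to a well-defined functor between the appropriate subcategories, and then verify conditions (R1)--(R5) of Definition \ref{recollement} one by one by pulling everything back to the ambient recollement.

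First, since $\mathcal{V}$ is thick in $\mathcal{B}$, it is closed under extensions, cones, and cocones, so the biadditive functor $\mathbb{E}_{\B}$ and realization $\mathfrak{s}_{\B}$ restrict to give $\mathcal{V}$ the structure of an extriangulated category; by Lemma \ref{lemma 32}, $j^{\ast}\mathcal{V}$ is thick in $\mathcal{C}$ and so inherits an extriangulated structure from $\mathcal{C}$ in the same way. Next I would check that the six restrictions are well-defined: $\overline{i_{\ast}}$ takes $\mathcal{A}$ into $\mathcal{V}$ by hypothesis; $\overline{i^{\ast}}$ and $\overline{i^{!}}$ land in $\mathcal{A}$ tautologically, and by Remark \ref{remark} their images fill all of $\mathcal{A}$; $\overline{j^{\ast}}$ lands in $j^{\ast}\mathcal{V}$ by definition; and $\overline{j_{!}}, \overline{j_{\ast}}$ take $j^{\ast}\mathcal{V}$ into $\mathcal{V}$ by Lemma \ref{lemma 31}(2).

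With the functors in place, I would verify (R1) by applying Lemma \ref{adjoint-restrict} to each adjunction. For $(\overline{i^{\ast}}, \overline{i_{\ast}})$ the required closure $i_{\ast}i^{\ast}\mathcal{V}\subseteq\mathcal{V}$ comes from Lemma \ref{lemma 31}(1); for $(\overline{i_{\ast}}, \overline{i^{!}})$ one uses $i^{!}i_{\ast}\mathcal{A}\cong\mathcal{A}$ from Lemma \ref{CY}(1); and for $(\overline{j_{!}}, \overline{j^{\ast}}, \overline{j_{\ast}})$ both closures $j_{!}j^{\ast}\mathcal{V}\subseteq\mathcal{V}$ and $j_{\ast}j^{\ast}\mathcal{V}\subseteq\mathcal{V}$ again come from Lemma \ref{lemma 31}(2), combined with $j^{\ast}j_{!}\cong\Id\cong j^{\ast}j_{\ast}$. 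Condition (R2) reduces to
\[
\Im\overline{i_{\ast}}=i_{\ast}\mathcal{A}=\mathcal{V}\cap i_{\ast}\mathcal{A}=\mathcal{V}\cap\Ker j^{\ast}=\Ker\overline{j^{\ast}},
\]
where the first equality uses $i_{\ast}\mathcal{A}\subseteq\mathcal{V}$ and the third is (R2) for the original recollement. Condition (R3) is automatic since the restriction of a fully faithful functor to a full subcategory remains fully faithful.

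For (R4) and (R5), I would observe that if $X\in\mathcal{V}$, then every object appearing in the sequences (\ref{first}) and (\ref{second}) already lies in $\mathcal{V}$: the terms $i_{\ast}i^{!}X$, $j_{\ast}j^{\ast}X$, $j_{!}j^{\ast}X$, $i_{\ast}i^{\ast}X$ are in $\mathcal{V}$ by Lemma \ref{lemma 31}(1)(2), while the intermediate object $i_{\ast}A$ (resp. $i_{\ast}A'$) is in $i_{\ast}\mathcal{A}\subseteq\mathcal{V}$; hence the given $\mathbb{E}_{\B}$-triangle sequences are $\mathbb{E}$-triangle sequences in the inherited extriangulated structure on $\mathcal{V}$, and the adjunction morphisms $\theta_{X},\vartheta_{X},\upsilon_{X},\nu_{X}$ agree with those of the restricted adjunctions by Lemma \ref{adjoint}. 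Finally, to confirm that each restricted functor has the correct exactness type (exact, right exact, or left exact as listed in Definition \ref{recollement}), I would note that $\mathbb{E}$-triangles in $\mathcal{V}$ and $j^{\ast}\mathcal{V}$ are a subclass of $\mathbb{E}$-triangles in $\mathcal{B}$ and $\mathcal{C}$, and the corresponding natural transformations on $\mathbb{E}$-bifunctors restrict automatically. The main point requiring care is the verification that the restricted right/left exact functors still satisfy clauses (2) and (3) of Definitions \ref{right exact} (and its dual): here the intermediate object $A'$ produced in the ambient category must itself lie in the relevant subcategory, which follows because that object is a cone/cocone of objects already in the subcategory, and $\mathcal{V}$, $j^{\ast}\mathcal{V}$ are closed under cones and cocones.
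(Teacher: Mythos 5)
Your proposal is correct and follows essentially the same route as the paper's proof: restrict the six functors, invoke Lemma \ref{lemma 31}, Lemma \ref{lemma 32}, Remark \ref{remark} and Lemma \ref{adjoint-restrict} for (R1)--(R3), and observe that the sequences in (R4)--(R5) already live in $\mathcal{V}$. You actually supply more detail than the paper does on (R4), (R5) and the exactness types of the restricted functors, which the paper dispatches with ``we obtain (R4) and (R5) immediately.''
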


\begin{proof} By Remark \ref{remark}, $i^{\ast}\mathcal{V} = i^{!}\mathcal{V} = \mathcal{A}$ is thick.
By Lemma \ref{lemma 32}, $j^{\ast}\mathcal{V}$ is a thick subcategory in $\mathcal{C}$. It is clear that $\overline{i_{*}}, \overline{j^{\ast}}$ are exact functors, $\overline{i^{\ast}}$, $\overline{j_!}$ are right exact functors and $\overline{i^{!}}$, $\overline{j_\ast}$ are left exact functors. It follows from Lemma \ref{lemma 31}(1) and Lemma \ref{adjoint-restrict} that $(\overline{i^{*}}, \overline{i_{\ast}}, \overline{i^{!}})$ and $(\overline{j_!}, \overline{j^\ast}, \overline{j_\ast})$ are adjoint triples. Hence (R1) holds. It is evident that $\Im \overline{i_{\ast}}=\Ker \overline{j^{\ast}}$, then (R2) holds. From (R3), we know that $i_\ast$, $j_!$, and $j_\ast$ are fully faithful. Thus, the restriction functors $\overline{i_\ast}$, $\overline{j_!}$, and $\overline{j_\ast}$ are also fully faithful. Therefore, (R3) holds. We obtain (R4) and (R5) immediately, and the proof is complete.
\end{proof}

We provide an example illustrating Proposition \ref{main-thick} to end this section.

\begin{example} Keep the notations used in Example \ref{fang}.

Set $\mathcal{X}_{1}=\mod A$, $\mathcal{X}_{2}=\add({S_1\oplus P_1})$ and
$$\mathcal{X}=\add({S_2\choose0}\oplus{P_1\choose 0}\oplus{S_1\choose 0}\oplus  {P_1\choose P_1}_{1}\oplus {S_1\choose P_1}_{\psi}\oplus{S_1\choose S_1}_{1}\oplus {0\choose P_1}\oplus {0\choose S_1}),$$
By \cite[Example 3.5]{WL}, we have a recollement of extriangulated categories which is neither abelian nor triangulated as follows:
\begin{equation*}
  \xymatrix{\mathcal{X}_{1}\ar[rr]|{i_{*}}&&\ar@/_1pc/[ll]|{i^{*}}\ar@/^1pc/[ll]|{i^{!}}\mathcal{X}
\ar[rr]|{j^{\ast}}&&\ar@/_1pc/[ll]|{j_{!}}\ar@/^1pc/[ll]|{j_{\ast}} \mathcal{X}_{2}}
\end{equation*}

Let $\mathcal{X}_{1}=\mod A$, and
$$\mathcal{V}=\add({S_2\choose0}\oplus{P_1\choose 0}\oplus{S_1\choose 0} \oplus{S_1\choose S_1}_{1}\oplus {0\choose S_1})\in\Thick_{i_{\ast}\mathcal{X}_{1}}\mathcal{X},$$
then $j^{\ast}\mathcal{V} = \add({S_1})$ is a thick subcategory of $\mathcal{X}_{2}$.
It is straightforward to check that

\begin{equation*}
  \xymatrix{\mathcal{X}_{1}\ar[rr]|{\overline{i_{*}}}&&\ar@/_1pc/[ll]|{\overline{i^{*}}}\ar@/^1pc/[ll]|{\overline{i^{!}}}\mathcal{V}
\ar[rr]|{\overline{j^{\ast}}}&&\ar@/_1pc/[ll]|{\overline{j_{!}}}\ar@/^1pc/[ll]|{\overline{j_{\ast}}}j^{\ast}\mathcal{V}}
\end{equation*}
is a recollement of thick subcategories.

\end{example}

\section{Gluing of silting subcategories}
%

\begin{definition}\label{hcotors} \text{\rm (\cite[Definition 3.1]{AT})}
Let $\mathcal{T}$ and $\mathcal{F}$ be two subcategories of the extriangulated category $\mathscr{C}$.
\begin{itemize}
  \item [(1)] The pair $(\mathcal{T},\mathcal{F})$ is called a {\em cotorsion pair} in $\mathscr{C}$ if it satisfies the following conditions:
      \begin{itemize}
          \item [(CP1)] $\mathcal{T}$ and $\mathcal{F}$ are closed under direct sunmmands.
          \item [(CP2)] $\mathbb{E}(\mathcal{T},\mathcal{F})=0.$
          \item [(CP3)] For any $C\in\mathscr{C}$, there exists a conflation $F\longrightarrow T\longrightarrow C$ such that $F\in\mathcal{F}$, $T\in\mathcal{T}$.
          \item [(CP4)] For any $C\in\mathscr{C}$, there exists a conflation $C\longrightarrow F'\longrightarrow T'$ such that $F'\in\mathcal{F}$, $T'\in\mathcal{T}$.
      \end{itemize}
  \item [(2)] A cotorsion pair $(\mathcal{T},\mathcal{F})$ is called a {\em hereditary cotorsion pair} if it satisfies the following condition.
      \begin{itemize}
          \item [(HCP)] $\mathbb{E}^{k}(\mathcal{T},\mathcal{F})=0$ for each $k \geq 2$.
      \end{itemize}
\end{itemize}
\end{definition}

\begin{definition}(\cite[Definition 4.1]{AT})
Let $\mathscr{C}$ be an extriangulated category and $\mathcal{X}$ be a subcategory of $\mathscr{C}$. For each $n \geq 0$, we inductively define subcategories $$\mathcal{X}_{n}^{\wedge} := \Cone(\mathcal{X}_{n-1}^{\wedge}, \mathcal{X})~\text{and}~\mathcal{X}_{n}^{\vee} := \Cocone(\mathcal{X}, \mathcal{X}_{n-1}^{\vee}),$$
where $\mathcal{X}_{-1}^{\wedge} := \{0\}$ and $\mathcal{X}_{-1}^{\vee} := \{0\}$. Put
$$\mathcal{X}^{\wedge} := \bigcup \mathcal{X}_{n}^{\wedge},~~\mathcal{X}^{\vee} := \bigcup \mathcal{X}_{n}^{\vee}.$$
\end{definition}

We say  a cotorsion pair $(\mathcal{T},\mathcal{F})$ in $\mathscr{C}$ is {\em bounded} if $ \mathcal{T}^{\wedge}=\mathscr{C}=\mathcal{F}^{\vee}$ (See \cite[Section 4]{AT}).

\begin{definition}\label{silt-subcat}(\cite[Definition 5.1]{AT})
Assume that $\mathcal{M}$ is a subcategory of the extriangulated category $\mathscr{C}$. We call $\mathcal{M}$ a {\em silting subcategory} of $\mathscr{C}$ if it satisfies the following conditions.
\begin{itemize}
  \item [(1)] $\mathcal{M}$ is a {\em presilting subcategory}, i.e., $\mathcal{M} = \add\mathcal{M}$ and $\mathbb{E}^{k}(\mathcal{M}, \mathcal{M}) = 0$ for all $k \geq 1$.
  \item [(2)] $\mathscr{C} = \text{thick}\mathcal{M}$.
\end{itemize}
\end{definition}

Let $\text{bbd-hcotors}\mathscr{C}$ denote the set of bounded hereditary cotorsion pairs of $\mathscr{C}$, and let $\text{silt}\mathscr{C}$ denote the set of silting subcategories of $\mathscr{C}$. By \cite[Theorem 5.7]{AT}, there exists a bijection
$$
\xymatrix@C=3.5pc{{\text{bbd-hcotors}\mathscr{C}}\ar@<-1ex>[r]_-{\Phi }&
{\text{silt}\mathscr{C} }\ar@<-1ex>[l]_-{\Psi},        }
$$
where the map $\Phi$ is given by $\Phi(\mathcal{X},\mathcal{Y}) :=  \mathcal{X} \cap \mathcal{Y}$ and the inverse given by
$\Psi(\mathcal{M}) := (\mathcal{M}^{\vee}, \mathcal{M}^{\wedge})$.  Therefore, the silting subcategories  is often identified with bounded hereditary cotorsion pairs.

In what follows,  we assume that  $(\mathcal{A}, \mathcal{B}, \mathcal{C}, i^{*}, i_{\ast}, i^{!},j_!, j^\ast, j_\ast)$ is a recollement of extriangulated categories as in (\ref{recolle}). We need the following lemmas.

\begin{lemma}\label{H-extension}\text{\rm (\cite[Lemma 4.3]{MZ})}
Let $F:\mathcal{A}\rightarrow \mathcal{B}$ be an exact functor that admitting a right adjoint functor $G$.
For any $X\in\mathcal{A}, Y\in\mathcal{B}$, and for any $k \geq 1$, if one of the following conditions is satisfied
\begin{itemize}
  \item [(1)] If $F$ is an exact functor and preserves projective objects;
  \item [(2)] If $G$ is an exact functor and preserves injective objects;
\end{itemize}
then we have
$$\mathbb{E}^{k}_{\mathcal{B}}(FX,Y)\cong\mathbb{E}^{k}_{\mathcal{A}}(X,GY).$$
\end{lemma}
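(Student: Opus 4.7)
The plan is to proceed by induction on $k\ge 1$ via dimension shifting, mirroring the classical adjoint-$\Ext$ computation from abelian categories but carried out inside the extriangulated framework with the aid of Lemma~\ref{Long-exact}. I will sketch case (1) in detail; case (2) is handled dually, by swapping a projective-middle syzygy conflation of $X$ for an injective-middle cosyzygy conflation of $Y$ and using that $G$ is exact and preserves injectives.

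For the base case $k=1$, I would choose a conflation $\Omega X\to P\to X\dashrightarrow$ in $\mathcal{A}$ with $P\in\mathcal{P}(\mathcal{A})$ (in the intended applications $\mathcal{B}$ has enough projectives, so $\mathcal{A}$ does too by Lemma~\ref{CY}(5)). Applying the exact functor $F$ produces a conflation $F\Omega X\to FP\to FX\dashrightarrow$ in $\mathcal{B}$ whose middle term is projective by hypothesis~(1). Invoking Lemma~\ref{Long-exact} with second variable $GY$ on the $\mathcal{A}$-side and $Y$ on the $\mathcal{B}$-side, and using the vanishings $\mathbb{E}_{\mathcal{A}}(P,GY)=0=\mathbb{E}_{\mathcal{B}}(FP,Y)$, both $\mathbb{E}_{\mathcal{A}}(X,GY)$ and $\mathbb{E}_{\mathcal{B}}(FX,Y)$ are presented as cokernels of parallel maps between $\Hom$-groups. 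The natural $(F,G)$-adjunction then identifies the two diagrams and yields the $k=1$ isomorphism.

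For the inductive step $k\ge 2$, the same pair of conflations---together with the stronger vanishings $\mathbb{E}^{k-1}_{\mathcal{A}}(P,GY)=\mathbb{E}^{k}_{\mathcal{A}}(P,GY)=0$ and $\mathbb{E}^{k-1}_{\mathcal{B}}(FP,Y)=\mathbb{E}^{k}_{\mathcal{B}}(FP,Y)=0$ afforded by projectivity of $P$ and $FP$---produce dimension-shift isomorphisms
\[
\mathbb{E}^{k}_{\mathcal{A}}(X,GY)\cong \mathbb{E}^{k-1}_{\mathcal{A}}(\Omega X,GY),\qquad \mathbb{E}^{k}_{\mathcal{B}}(FX,Y)\cong \mathbb{E}^{k-1}_{\mathcal{B}}(F\Omega X,Y).
\]
Applying the inductive hypothesis to the pair $(\Omega X,Y)$ identifies the two right-hand sides and closes the induction.

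I do not expect a substantive obstacle. The only point requiring care is verifying that the adjunction isomorphism on $\Hom$-groups is natural with respect to the restriction map induced by $\Omega X\to P$, so that the two cokernels really are identified; this is a purely formal consequence of the naturality of the unit and counit of $(F,G)$. The essential categorical content of the lemma is the fact that $F$ carries a projective syzygy of $X$ to a projective syzygy of $FX$, which is precisely hypothesis~(1); in case (2), the analogous statement is that $G$ carries an injective cosyzygy of $Y$ to one of $GY$, which is hypothesis~(2).
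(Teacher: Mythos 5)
The paper offers no proof of this lemma---it is imported verbatim from \cite[Lemma 4.3]{MZ}---and your dimension-shifting induction is the standard argument for such adjoint isomorphisms on higher extensions; it is correct. The one hypothesis you should state explicitly rather than relegate to an aside is that the source category of $F$ has enough projectives (dually, in case (2), that the source of $G$ has enough injectives): this is needed both to produce the syzygy conflation $\Omega X\rightarrow P\rightarrow X$ and for the groups $\mathbb{E}^{k}$ of \cite{Go}/\cite{Liu} to satisfy the long exact sequences of Lemma \ref{Long-exact} together with the vanishing $\mathbb{E}^{k}(P,-)=0$ for $k\geq 1$; this holds in every application the paper makes of the lemma (Lemma \ref{prop 311}).
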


\begin{lemma}\label{prop 311} For any positive integer $n\geq1$, the following holds.
\begin{itemize}
  \item [(1)] If $\mathcal{B}$ has enough projective objects and $i^{\ast}$ is exact. Then  $\mathbb{E}^{n}_{\mathcal{A}}(i^{\ast}B, A) \cong \mathbb{E}^{n}_{\mathcal{B}}(B, i_{\ast}A)$ for any $A \in \mathcal{A}$ and $B \in \mathcal{B}$.
  \item [(2)] If $\mathcal{A}$ has enough projective objects and $i^{!}$ is exact. Then  $\mathbb{E}^{n}_{\mathcal{B}}(i_{\ast}A, B) \cong \mathbb{E}^{n}_{\mathcal{A}}(A, i^{!}B)$ for any $A \in \mathcal{A}$ and $B \in \mathcal{B}$.
  \item [(3)] If $\mathcal{C}$ has enough projective objects and $j_{!}$ is exact. Then  $\mathbb{E}^{n}_{\mathcal{B}}(j_{!}C, B) \cong \mathbb{E}^{n}_{\mathcal{C}}(C, j^{\ast}B)$ for any $C \in \mathcal{C}$ and $B \in \mathcal{B}$.
  \item [(4)] If $\mathcal{B}$ has enough projective objects and $j_{\ast}$ is exact. Then
$\mathbb{E}^{n}_{\mathcal{C}}(j^{\ast}B, C) \cong \mathbb{E}^{n}_{\mathcal{B}}(B, j_{\ast}C)$ for any $C \in \mathcal{C}$ and $B \in \mathcal{B}$.
\end{itemize}
\end{lemma}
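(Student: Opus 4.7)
The plan is to derive all four isomorphisms as direct applications of Lemma \ref{H-extension}(1). That lemma requires an adjoint pair $(F,G)$ with $F$ exact and preserving projective objects, and the source category of $F$ having enough projectives (implicit so that the higher extensions $\mathbb{E}^n$ behave well); the recollement data gives us exactly such pairs in each case, with the preservation of projectives coming from Lemma \ref{CY}.

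For part (1), I would take $F=i^{\ast}$ and $G=i_{\ast}$. The pair $(i^{\ast},i_{\ast})$ is adjoint by (R1), $i^{\ast}$ is exact by hypothesis, and $i^{\ast}$ preserves projectives unconditionally by Lemma \ref{CY}(3). Since $\mathcal{B}$ has enough projectives by hypothesis, Lemma \ref{H-extension}(1) applies and yields $\mathbb{E}^{n}_{\mathcal{A}}(i^{\ast}B,A)\cong\mathbb{E}^{n}_{\mathcal{B}}(B,i_{\ast}A)$. For part (2), I would instead take $F=i_{\ast}$ and $G=i^{!}$. The pair $(i_{\ast},i^{!})$ is adjoint by (R1), $i_{\ast}$ is exact from Definition \ref{recollement}, and by Lemma \ref{CY}(4) the exactness of $i^{!}$ forces $i_{\ast}$ to preserve projectives. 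Since $\mathcal{A}$ has enough projectives by hypothesis, Lemma \ref{H-extension}(1) delivers the claimed isomorphism.

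Parts (3) and (4) are strictly parallel. For (3), take $F=j_{!}$ and $G=j^{\ast}$; the pair is adjoint by (R1), $j_{!}$ is exact by hypothesis, and $j_{!}$ preserves projectives by Lemma \ref{CY}(3'), while $\mathcal{C}$ has enough projectives by hypothesis. For (4), take $F=j^{\ast}$ and $G=j_{\ast}$; the pair is adjoint by (R1), $j^{\ast}$ is exact from Definition \ref{recollement}, and exactness of $j_{\ast}$ together with Lemma \ref{CY}(4) yields that $j^{\ast}$ preserves projectives, with $\mathcal{B}$ having enough projectives. In both cases Lemma \ref{H-extension}(1) closes the argument.

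There is no real obstacle here; the statement is essentially a catalogue of the four adjoint pairs coming from the recollement, and in each case Lemma \ref{H-extension}(1) has a matching hypothesis of the form \emph{the left adjoint is exact and preserves projectives}. The only point to keep straight is which exactness assumption in the statement is actually needed to activate which clause of Lemma \ref{CY} so that the left adjoint in question preserves projectives; once that matching is made explicit as above, each part is a one-line invocation of Lemma \ref{H-extension}(1).
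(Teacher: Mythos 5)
Your proof is correct and follows essentially the same route as the paper: the authors prove part (1) exactly as you do, by combining (R1), Lemma \ref{CY}(3) and Lemma \ref{H-extension}, and then simply state that (2)--(4) are similar. Your explicit matching of each remaining case to the appropriate adjoint pair and the relevant clause of Lemma \ref{CY} correctly fills in what the paper leaves implicit.
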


\begin{proof}
We will only prove (1), the proof of (2), (3), and (4) are similar.
By (R1), $(i^{\ast}, i_{\ast})$ is an adjoint pair. Applying Lemma \ref{CY}(3), we have that $i^{\ast}$ preserves projective objects. Since $i^{\ast}$ is exact, by Lemma \ref{H-extension}, then $\mathbb{E}^{n}_{\mathcal{A}}(i^{\ast}B, A) \cong \mathbb{E}^{n}_{\mathcal{B}}(B, i_{\ast}A)$ holds for any $A \in \mathcal{A}$ and $B \in \mathcal{B}$.
\end{proof}

To present the second main result of this paper, we need the following notion from \cite{WL}.

\begin{definition}\label{glued}  \text{\rm (\cite[Definition 4.3]{WL})}
Given cotorsion pairs $(\mathcal{T}_{1},\mathcal{F}_{1})$ and $(\mathcal{T}_{2},\mathcal{F}_{2})$ in $\mathcal{A}$ and $\mathcal{C}$, respectively, we define
\begin{equation*}
\mathcal{T}=\{B\in \mathcal{B}~|~i^{\ast }B\in\mathcal{T}_{1}~\text{and}~j^{\ast}B\in \mathcal{T}_{2}  \}
\end{equation*}
\begin{equation*}
\mathcal{F}=\{B\in \mathcal{B}~|~i^{!}B\in\mathcal{F}_{1}~\text{and}~j^{\ast}B\in \mathcal{F}_{2}  \}.
\end{equation*}
In this case, we refer to $(\mathcal{T},\mathcal{F})$ as the {\em glued pair} with respect to $(\mathcal{T}_{1},\mathcal{F}_{1})$ and $(\mathcal{T}_{2},\mathcal{F}_{2})$.
\end{definition}

\begin{lemma}\label{main-silt-lemma}
Let $(\mathcal{T}_{1},\mathcal{F}_{1})$ and $(\mathcal{T}_{2},\mathcal{F}_{2})$ be two bounded hereditary cotorsion pairs in $\mathcal{A}$ and $\mathcal{C}$, respectively. Let $(\mathcal{T},\mathcal{F})$ be the glued pair with respect to $(\mathcal{T}_{1},\mathcal{F}_{1})$ and $(\mathcal{T}_{2},\mathcal{F}_{2})$.
Suppose that  $i^{!}, i^{\ast}$ are exact and $M \in \mathcal{B}$.
\begin{itemize}
  \item[(1)] There exists an $\mathbb{E}_\mathcal{B}$-triangle
$K_{n-1} \stackrel{}{\longrightarrow} T_{n}\stackrel{}{\longrightarrow} M\stackrel{}\dashrightarrow$ satisfying $T_{n} \in \mathcal{T}$, $i^{!}K_{n-1} \in (\mathcal{T}_{1})^{\wedge}_{m-1}$ and
$j^{\ast}K_{n-1} \in (\mathcal{T}_{2})^{\wedge}_{n-1}$ for some non-negative integers $m, n$.
  \item[(2)] There exists an $\mathbb{E}_\mathcal{B}$-triangle
$M \stackrel{}{\longrightarrow} F_{n'}\stackrel{}{\longrightarrow} L_{n'-1}\stackrel{}\dashrightarrow$ satisfying $F_{n'} \in \mathcal{F}$, $i^{!}L_{n'-1} \in (\mathcal{F}_{1})^{\vee}_{m'-1}$ and
$j^{\ast}L_{n'-1} \in (\mathcal{F}_{2})^{\vee}_{n'-1}$ for some non-negative integers $m', n'$.
\end{itemize}
\end{lemma}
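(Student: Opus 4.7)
The plan is to construct the $\mathbb{E}_\mathcal{B}$-triangle in (1) by separately resolving the two ``components'' of $M$ — the $j^{\ast}$-part through $j_{!}$-lifts of $\mathcal{T}_{2}$-approximations, and the $i^{!}$-part through $i_{\ast}$-lifts of $\mathcal{T}_{1}$-approximations — and then gluing the pieces together using axiom (ET4) and the recollement triangle sequences (R4), (R5). Part (2) will be proved dually, replacing $j_{!}$ by $j_{\ast}$ and the $\wedge$-filtrations by $\vee$-filtrations.

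First, since both cotorsion pairs are bounded, one has $\mathcal{A}=(\mathcal{T}_{1})^{\wedge}=(\mathcal{F}_{1})^{\vee}$ and $\mathcal{C}=(\mathcal{T}_{2})^{\wedge}=(\mathcal{F}_{2})^{\vee}$, so I can fix $m,n\geq 0$ with $i^{!}M\in(\mathcal{T}_{1})^{\wedge}_{m}$ and $j^{\ast}M\in(\mathcal{T}_{2})^{\wedge}_{n}$. By Lemma \ref{CY}(8), (8') the hypotheses ``$i^{\ast},i^{!}$ exact'' upgrade to exactness of $j_{!}$ and $j_{\ast}$, so every functor in sight sends $\mathbb{E}$-triangles to $\mathbb{E}$-triangles. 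Then, using the inductive definition $(\mathcal{T}_{1})^{\wedge}_{m}=\Cone((\mathcal{T}_{1})^{\wedge}_{m-1},\mathcal{T}_{1})$ and the analogue for $\mathcal{T}_{2}$, I choose
\begin{equation*}
X^{A}\longrightarrow S^{A}\longrightarrow i^{!}M\dashrightarrow,\qquad X^{C}\longrightarrow S^{C}\longrightarrow j^{\ast}M\dashrightarrow
\end{equation*}
with $S^{A}\in\mathcal{T}_{1}$, $X^{A}\in(\mathcal{T}_{1})^{\wedge}_{m-1}$, $S^{C}\in\mathcal{T}_{2}$, $X^{C}\in(\mathcal{T}_{2})^{\wedge}_{n-1}$. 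Applying the exact functors $i_{\ast}$ and $j_{!}$ lifts these to $\mathbb{E}_{\mathcal{B}}$-triangles $i_{\ast}X^{A}\to i_{\ast}S^{A}\to i_{\ast}i^{!}M\dashrightarrow$ and $j_{!}X^{C}\to j_{!}S^{C}\to j_{!}j^{\ast}M\dashrightarrow$, and composition with the counits $\theta_{M}\colon i_{\ast}i^{!}M\to M$ and $\upsilon_{M}\colon j_{!}j^{\ast}M\to M$ from (R4), (R5) supplies morphisms $i_{\ast}S^{A}\to M$ and $j_{!}S^{C}\to M$.

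The key step is to weave these one-sided data into a single $\mathbb{E}_{\mathcal{B}}$-triangle $K_{n-1}\to T_{n}\to M\dashrightarrow$ with $T_{n}$ built from $i_{\ast}S^{A}$ and $j_{!}S^{C}$. I plan to do this by applying (ET4) to the pair of $\mathbb{E}_{\mathcal{B}}$-triangles $i_{\ast}X^{A}\to i_{\ast}S^{A}\to i_{\ast}i^{!}M\dashrightarrow$ and $i_{\ast}i^{!}M\to M\to\tilde{M}\dashrightarrow$ (from (R4)) to realize $i_{\ast}S^{A}$ as a term of a triangle mapping onto $M$, and then splicing in the $j_{!}$-side via the analogous octahedron built on $j_{!}j^{\ast}M\to\hat M\to i_{\ast}i^{\ast}M\dashrightarrow$ coming from (R5). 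Using the identities $i^{\ast}i_{\ast}\cong\Id$, $j^{\ast}j_{!}\cong\Id$, $i^{\ast}j_{!}=0$, $j^{\ast}i_{\ast}=0$ from Lemma \ref{CY}(1), (2), a direct computation then shows $i^{\ast}T_{n}\cong S^{A}\in\mathcal{T}_{1}$ and $j^{\ast}T_{n}\cong S^{C}\in\mathcal{T}_{2}$, so $T_{n}\in\mathcal{T}$.

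Finally, I apply $i^{!}$ and $j^{\ast}$ to the triangle $K_{n-1}\to T_{n}\to M\dashrightarrow$; the exactness of these functors and the long exact $\mathbb{E}^{k}$-sequences of Lemma \ref{Long-exact}, combined with the Ext-adjunctions of Lemma \ref{prop 311} and the hereditary vanishing from Definition \ref{hcotors}, allow me to read off that $j^{\ast}K_{n-1}$ is identified with $X^{C}$ (up to a split contribution from $\mathcal{T}_{2}$) and $i^{!}K_{n-1}$ is built from $X^{A}$, placing them in $(\mathcal{T}_{2})^{\wedge}_{n-1}$ and $(\mathcal{T}_{1})^{\wedge}_{m-1}$ respectively. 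The main obstacle is the middle step: upgrading the two one-sided approximations into a genuine $\mathbb{E}_{\mathcal{B}}$-deflation onto $M$. In a triangulated setting this is a free ``cone of a map'', but in the extriangulated setting it requires delicate bookkeeping with (ET4) and the right/left-exact triangle sequences (R4), (R5) to ensure that the assembled morphism is actually a deflation; the additional requirement that the kernel's $i^{!}$-level drop to $m-1$ (rather than merely lying in $\mathcal{F}_{1}$) is what forces the careful octahedral construction rather than a simpler direct-sum definition of $T_{n}$.
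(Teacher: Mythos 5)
Your proposal correctly identifies the two ingredients (a $\mathcal{T}_2$-approximation on the $\mathcal{C}$-side and a $\mathcal{T}_1$-approximation on the $\mathcal{A}$-side, transported into $\mathcal{B}$ and spliced by an octahedron), but the central gluing step --- which you yourself flag as ``the main obstacle'' --- is not actually carried out, and the specific mechanism you propose for it does not work. On the $\mathcal{A}$-side you want to apply (ET4) to the pair $i_{\ast}X^{A}\to i_{\ast}S^{A}\to i_{\ast}i^{!}M\dashrightarrow$ and $i_{\ast}i^{!}M\to M\to\tilde{M}\dashrightarrow$. But this configuration is a \emph{deflation} $i_{\ast}S^{A}\to i_{\ast}i^{!}M$ followed by an \emph{inflation} $i_{\ast}i^{!}M\to M$: neither (ET4) (which composes two inflations) nor $\rm(ET4)^{op}$ (which composes two deflations) applies, and the composite $i_{\ast}S^{A}\to M$ is in general neither an inflation nor a deflation. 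The same problem occurs on the $\mathcal{C}$-side: the counit $\upsilon_{M}\colon j_{!}j^{\ast}M\to M$ from (R5) is only a compatible deflation followed by an inflation, so composing it with the deflation $j_{!}S^{C}\to j_{!}j^{\ast}M$ does not produce a deflation onto $M$, and there is no pullback/pushout move available to turn your triangle ending in $j_{!}j^{\ast}M$ into one ending in $M$.

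The paper's proof resolves exactly this difficulty by using the morphisms that go in the \emph{other} direction. On the $\mathcal{C}$-side it uses $j_{\ast}$ (exact because $i^{!}$ is exact) and pulls the extension $\delta\in\mathbb{E}_{\mathcal{B}}(j_{\ast}j^{\ast}M,\,j_{\ast}K^{\mathcal{C}}_{n-1})$ back along the unit $\eta_{M}\colon M\to j_{\ast}j^{\ast}M$ via the contravariant functoriality of $\mathbb{E}$, obtaining a genuine $\mathbb{E}_{\mathcal{B}}$-triangle $j_{\ast}K^{\mathcal{C}}_{n-1}\to H\to M\dashrightarrow$ with $j^{\ast}H\in\mathcal{T}_{2}$. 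On the $\mathcal{A}$-side it then approximates $i^{\ast}H$ (not $i^{!}M$ --- note that membership in $\mathcal{T}$ is tested by $i^{\ast}$, and (R5) provides the deflation $H\to i_{\ast}i^{\ast}H$ needed to pull the approximation back, via the pullback of two deflations from \cite[Proposition 3.15]{Na}), getting $i_{\ast}K^{\mathcal{A}}_{m-1}\to T_{n}\to H\dashrightarrow$; only then does $\rm(ET4)^{op}$ apply, to the two deflations $T_{n}\to H$ and $H\to M$. The sequential order is essential: the second approximation is performed on $i^{\ast}H$, and the facts $j^{\ast}i_{\ast}=0$ and $i^{!}j_{\ast}=0$ are what make $j^{\ast}K_{n-1}\cong K^{\mathcal{C}}_{n-1}$ and $i^{!}K_{n-1}\cong K^{\mathcal{A}}_{m-1}$ in the end. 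To repair your argument you would need to replace the $(j_{!},\mathrm{counit})$ and $(i_{\ast}i^{!}M\to M)$ devices by the $(j_{\ast},\mathrm{unit})$ and $(H\to i_{\ast}i^{\ast}H)$ devices, i.e.\ essentially rewrite the middle of the proof along the paper's lines.
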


\begin{proof} We will only prove (1), the proof of (2) is similar.
For any $M \in \mathcal{B}$, we have $j^{\ast}M \in \mathcal{C}$, and there exists a non-negative integer $n$ such that $j^{\ast}M \in (\mathcal{T}_{2})^{\wedge}_{n}$, since $(\mathcal{T}_{2},\mathcal{F}_{2})$ is a bounded hereditary cotorsion pair in $\mathcal{C}$.
Consequently, we obtain an $\mathbb{E}_\mathcal{C}$-triangle
$K^{\mathcal{C}}_{n-1} \stackrel{}{\longrightarrow} T^{\mathcal{C}}_{n} \stackrel{}{\longrightarrow} j^{\ast}M \stackrel{}\dashrightarrow$ with $K^{\mathcal{C}}_{n-1} \in (\mathcal{T}_{2})^{\wedge}_{n-1}$ and $T^{\mathcal{C}}_{n} \in \mathcal{T}_{2}$. Since $i^{!}$ is exact, by Lemma \ref{CY}$(8')$, the functor $j_{\ast}$ is exact.
Applying $j_{\ast}$ to the above $\mathbb{E}_\mathcal{C}$-triangle, we obtain an $\mathbb{E}_\mathcal{B}$-triangle $j_{\ast}K^{\mathcal{C}}_{n-1} \stackrel{}{\longrightarrow} j_{\ast}T^{\mathcal{C}}_{n} \stackrel{}{\longrightarrow} j_{\ast}j^{\ast}M \stackrel{\delta}\dashrightarrow$.
$\eta_{M}$ is the unit of the adjoint pair $(j^{\ast},j_{\ast})$ yields the following commutative diagram:
\begin{equation}\label{11}
\xymatrix{
  j_{\ast}K^{\mathcal{C}}_{n-1}\ar@{=}[d] \ar[r] & H\ar[d]\ar[r] & M \ar[d]_{\eta_{M}} \ar@{-->}[r]^-{\eta_M^*\delta}  &  \\
  j_{\ast}K^{\mathcal{C}}_{n-1} \ar[r]^-{} & j_{\ast}T^{\mathcal{C}}_{n}\ar[r]^-{} & j_{\ast}j^{\ast}M \ar@{-->}[r]^-\delta&  }
\end{equation}
where the first row
\begin{equation}\label{extriangle-M}
j_{\ast}K^{\mathcal{C}}_{n-1}\stackrel{}{\longrightarrow} H\stackrel{}{\longrightarrow} M\stackrel{\eta_M^*\delta}\dashrightarrow
\end{equation}
is an $\mathbb{E}_\mathcal{B}$-triangle.
Applying the exact functor $j^{\ast}$ to (\ref{11}), we get the following commutative diagram:
\begin{equation*}
\xymatrix{
  j^{\ast}j_{\ast}K^{\mathcal{C}}_{n-1}\ar@{=}[d] \ar[r] & j^{\ast}H\ar[d]\ar[r] & j^{\ast}M \ar[d]_{j^{\ast}\eta_M} \ar@{-->}[r]^-{}  &  \\
  j^{\ast}j_{\ast}K^{\mathcal{C}}_{n-1} \ar[r]^-{} & j^{\ast}j_{\ast}T^{\mathcal{C}}_{n}\ar[r]^-{} & j^{\ast}j_{\ast}j^{\ast}M \ar@{-->}[r] &  .}
\end{equation*}
Using Lemma \ref{adjoint}(1) together with Lemma \ref{CY}(1), the morphism $j^{\ast}\eta_M$ is an isomorphism and hence $j^{\ast}H \cong T^{\mathcal{C}}_{n} \in \mathcal{T}_{2}$.

Notice that $i^{\ast}H \in \mathcal{A}$,
thus there exists a non-negative integer $m$ such that $i^{\ast}H \in (\mathcal{T}_{1})^{\wedge}_{m}$,
because $(\mathcal{T}_{1}, \mathcal{F}_{1})$ is a bounded hereditary cotorsion pair in $\mathcal{A}$.
Hence, we have an $\mathbb{E}_\mathcal{A}$-triangle
$K^{\mathcal{A}}_{m-1}\stackrel{}{\longrightarrow}T^{\mathcal{A}}_{m}\stackrel{}{\longrightarrow} i^{\ast}H\stackrel{}\dashrightarrow$ with $K^{\mathcal{A}}_{m-1}\in (\mathcal{T}_{1})^{\wedge}_{m-1}$ and $T^{\mathcal{A}}_{m}\in \mathcal{T}_{1}$.
Since $i_\ast$ is exact, we have that
$i_{\ast}K^{\mathcal{A}}_{m-1}\stackrel{}{\longrightarrow} i_{\ast}T_{m}^{\mathcal{A}} \stackrel{}{\longrightarrow} i_{\ast}i^{\ast}H\stackrel{}\dashrightarrow$ is an $\mathbb{E}_\mathcal{B}$-triangle. For $H\in \mathcal{B}$, by (R5), there exists a commutative diagram:
\begin{equation*}\label{SS}
\xymatrix{
  &i_{\ast}A' \ar[r]&j_{!}j^{\ast}H\ar[rr]^-{\upsilon_H}\ar[dr]_{h_{2}}&  &H\ar[r]^-{h}&i_{\ast}i^{\ast}H &\\
           &                &       &  L \ar[ur]_{h_{1}}& }
\end{equation*}
in $\mathcal{B}$ such that $i_{\ast}A'\stackrel{}{\longrightarrow} j_{!}j^{\ast}H \stackrel{h_{2}}{\longrightarrow} L \stackrel{}\dashrightarrow$
and
$L\stackrel{h_{1}}{\longrightarrow} H \stackrel{h}{\longrightarrow} i_{\ast}i^{\ast}H \stackrel{}\dashrightarrow$ are $\mathbb{E}_\mathcal{B}$-triangles, and $h_{2}$ is compatible. In particular,
$j_{!}j^{\ast}H \stackrel{v_H}{\longrightarrow} H \stackrel{h}{\longrightarrow} i_{\ast}i^{\ast}H$ is right exact.
By \cite[Proposition 3.15]{Na}, we have the following exact commutative diagram:
\begin{equation*}\label{4.10}
\xymatrix{   &   & L \ar[d]_{t}\ar@{=}[r] &  L\ar[d]^{h_{1}} &\\
  & i_{\ast}K^{\mathcal{A}}_{m-1} \ar[r]\ar@{=}[d]  &  T_{n} \ar[d]_{l} \ar[r]&  H \ar[d]^-h \\                                               & i_{\ast}K^{\mathcal{A}}_{m-1} \ar[r]  &  i_{\ast}T^{\mathcal{A}}_{m}  \ar[r]&  i_{\ast}i^{\ast}H & }
\end{equation*}
where the second row
\begin{equation}\label{extriangle-H}
i_{\ast}K^{\mathcal{A}}_{m-1}\stackrel{}{\longrightarrow} T_{n}\stackrel{}{\longrightarrow} H\stackrel{}\dashrightarrow
\end{equation}
is an $\mathbb{E}_\mathcal{B}$-triangle.
Consider the following commutative diagram:
\begin{equation}\label{K}
\xymatrix{
  j_{!}j^{\ast}H\ar[rr]\ar[dr]_-{h_{2}}&  &T_{n}\ar[r]^-{l}&i_{\ast}T^{\mathcal{A}}_{m} &\\
           &           L \ar[ur]_{t}      &       & & }
\end{equation}
where $h_{2}$ is a compatible deflation and $t$ is an inflation. Thus, the first row of (\ref{K}) is right exact. Applying the right exact functor $i^{\ast}$ to (\ref{K}),
using Lemma \ref{CY}(2), we obtain that that $i^{\ast}j_{!}j^{\ast}H=0$.
Thus, by \cite[Lemma 2.10(2)]{WL} together with Lemma \ref{CY}(1), we have that $i^{\ast}T_{n} \cong i^{\ast}i_{\ast} T^{\mathcal{A}}_{m}\cong T^{\mathcal{A}}_{m} \in \mathcal{T}_{1}$.
Applying $j^{\ast}$ to the $\mathbb{E}_\mathcal{B}$-triangle (\ref{extriangle-H}), we get an $\mathbb{E}_\mathcal{C}$-triangle
$j^{\ast}i_{\ast}K^{A}_{m-1} \stackrel{}{\longrightarrow} j^{\ast}T_{n} \stackrel{}{\longrightarrow} j^{\ast}H \stackrel{}\dashrightarrow$. By (R2), we have $j^{\ast}i_{\ast}K^{\mathcal{A}}_{m-1} = 0$, and using \cite[Lemma 2.10(2)]{WL}, we get $j^{\ast}T_{n} \cong j^{\ast}H$. Recall that $j^{\ast}H  \in \mathcal{T}_{2}$. Hence, we have $j^{\ast}T_{n} \cong j^{\ast}H \in \mathcal{T}_{2}$.  Since $i^{\ast}T_{n} \in \mathcal{T}_{1}$ and $j^{\ast}T_{n}\in \mathcal{T}_{2}$, we obtain that $T_{n}\in \mathcal{T}$.

Applying $\rm (ET4)^{op}$ to
$\mathbb{E}_\mathcal{B}$-triangles (\ref{extriangle-M}) and (\ref{extriangle-H}),
we get a  commutative diagram of conflations as follows:
\begin{equation}\label{7}
\xymatrix{
  i_{\ast}K^{\mathcal{A}}_{m-1} \ar@{=}[d] \ar[r] & K_{n-1}  \ar[d] \ar[r] & j_{\ast}K^{\mathcal{C}}_{n-1}  \ar[d] \\
  i_{\ast}K^{\mathcal{A}}_{m-1} \ar[r] & T_{n} \ar[d] \ar[r] &  H \ar[d] \\
  &  M  \ar@{=}[r] & M. }
\end{equation}
Applying $j^*$ to the $\mathbb{E}_\mathcal{B}$-triangle in the first row of (\ref{7}),
by (R2), we have $j^{\ast}i_{\ast}K^{\mathcal{A}}_{m-1} = 0$.
Using \cite[Lemma 2.10(2)]{WL} and Lemma \ref{CY}(1), we obtain that $j^{\ast}K_{n-1} \cong j^{\ast}j_{\ast}K^{\mathcal{C}}_{n-1} \cong K^{\mathcal{C}}_{n-1} \in (\mathcal{T}_{2})^{\wedge}_{n-1}$. Similarly, applying $i^!$ to the $\mathbb{E}_\mathcal{B}$-triangle in the first row of (\ref{7}),
by Lemma \ref{CY}(2), we get $i^{!}j_{\ast}K^{\mathcal{C}}_{n-1} = 0$. Thus,
$i^{!}K_{n-1}\cong i^{!}i_{\ast}K^{\mathcal{A}}_{m-1}\cong K^{\mathcal{A}}_{m-1} \in (\mathcal{T}_{1})^{\wedge}_{m-1}$ holds.
Hence, the $\mathbb{E}_\mathcal{B}$-triangle
$K_{n-1} \stackrel{}{\longrightarrow} T_{n}\stackrel{}{\longrightarrow} M\stackrel{}\dashrightarrow$ satisfies that $T_{n} \in \mathcal{T}$, $i^{!}K_{n-1} \in (\mathcal{T}_{1})^{\wedge}_{m-1}$ and
$j^{\ast}K_{n-1} \in (\mathcal{T}_{2})^{\wedge}_{n-1}$.
\end{proof}

Now we can state our  main result in this section.

\begin{theorem}\label{main-silt} Let $(\mathcal{A}, \mathcal{B}, \mathcal{C}, i^{*}, i_{\ast}, i^{!},j_!, j^\ast, j_\ast)$ be a recollement of extriangulated categories as in {\rm (\ref{recolle})}. Suppose that $\mathcal{B}$ has enough projective objects and $i^{!}, j_{!}$ are exact.

$(1)$ Let $\mathcal{M}_{\mathcal{A}}$ and $\mathcal{M}_{\mathcal{C}}$ be silting subcategories in $\mathcal{A}$ and $\mathcal{C}$ respectively.
If the functor $i^{\ast}$ is exact, then
$$\mathcal{M}_{\mathcal{B}}:=\{B\in \mathcal{B}~|~i^{\ast }B\in\mathcal{M}_{\mathcal{A}}^{\vee},j^{\ast}B\in \mathcal{M}_{\mathcal{C}}^{\vee},i^{!}B\in\mathcal{M}_{\mathcal{A}}^{\wedge},j^{\ast}B\in \mathcal{M}_{\mathcal{C}}^{\wedge}\}$$
is a silting subcategory in $\mathcal{B}$. In addition, if $\mathcal{M}_{\mathcal{A}}=\add M_{A}$ and $\mathcal{M}_{\mathcal{C}} = \add M_{C}$, then $M_{\mathcal{B}} = i_{\ast}M_{A} \oplus j_{!}M_{C}$.

$(2)$ Let $\mathcal{M}$ be a silting subcategory in $\mathcal{B}$.
 \begin{itemize}
   \item[(a)]  If $i_{\ast}i^{!}\mathcal{M}^{\vee} \subseteq \mathcal{M}^{\vee}$ and $i_{\ast}i^{\ast}\mathcal{M}^{\vee} \subseteq \mathcal{M}^{\vee}$,
       then $i^{\ast}\mathcal{M}^{\vee} \cap i^{!}\mathcal{M}^{\wedge}$ is a silting subcategory of $\mathcal{A}$.
       In addition, if $\mathcal{M} = \add M$, then $i^{\ast}M$ is a silting object of $\mathcal{A}$.
   \item[(b)] If $j_{\ast}j^{\ast}\mathcal{M}^{\wedge} \subseteq \mathcal{M}^{\wedge}$ or $j_{!}j^{\ast}\mathcal{M}^{\vee} \subseteq \mathcal{M}^{\vee}$,
       then $j^{\ast}\mathcal{M}^{\vee}\cap j^{\ast}\mathcal{M}^{\wedge}$ is a silting subcategory of $\mathcal{C}$.
       In addition, if $\mathcal{M} = \add M$, then $j^{\ast}M$ is a silting object of $\mathcal{C}$.
 \end{itemize}
\end{theorem}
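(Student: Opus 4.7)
The plan is to route the argument through the bijection of \cite[Theorem 5.7]{AT} between silting subcategories and bounded hereditary cotorsion pairs, since cotorsion pairs transport cleanly across the recollement via Definition~\ref{glued} and Lemma~\ref{main-silt-lemma}. For part (1), starting from the bounded hereditary cotorsion pairs $(\mathcal{M}_\mathcal{A}^\vee,\mathcal{M}_\mathcal{A}^\wedge)$ and $(\mathcal{M}_\mathcal{C}^\vee,\mathcal{M}_\mathcal{C}^\wedge)$ furnished by the bijection, I would form the glued pair $(\mathcal{T},\mathcal{F})$ in $\mathcal{B}$ as in Definition~\ref{glued}. The cotorsion-pair axioms reduce to the gluing construction of Wang--Zhang--Wei \cite{WL}. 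For heredity $\mathbb{E}_\mathcal{B}^k(\mathcal{T},\mathcal{F})=0$ ($k\ge 2$), I would split each $T\in\mathcal{T}$ along the (R5)-sequence into pieces in $i_{\ast}\mathcal{A}$ and $j_{!}\mathcal{C}$, convert the resulting extensions via Lemma~\ref{prop 311} into extensions in $\mathcal{A}$ and $\mathcal{C}$, and use heredity of the original pairs. Boundedness is exactly Lemma~\ref{main-silt-lemma}. The bijection then identifies the silting subcategory with $\mathcal{T}\cap\mathcal{F}$, which unwinds to $\mathcal{M}_\mathcal{B}$.

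For the additive-closure statement in part (1), set $M_\mathcal{B}:=i_{\ast}M_A\oplus j_{!}M_C$. The checks $i^{\ast}i_{\ast}M_A\cong M_A$, $i^{!}i_{\ast}M_A\cong M_A$, $j^{\ast}i_{\ast}M_A=0$, $i^{\ast}j_{!}M_C=0$ and $j^{\ast}j_{!}M_C\cong M_C$ are immediate from Lemma~\ref{CY}(1)(2). The one nontrivial check is $i^{!}j_{!}M_C\in\mathcal{M}_\mathcal{A}^\wedge$, for which I would apply the exact functor $i^{!}$ to the right exact (R5)-sequence for $X=j_{!}M_C$ and exploit $i^{\ast}j_{!}=0$ together with $j^{\ast}j_{!}\cong \Id_\mathcal{C}$ to compute the term. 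Conversely, an arbitrary $B\in\mathcal{M}_\mathcal{B}$ is reconstructed from $i_{\ast}i^{\ast}B$ and $j_{!}j^{\ast}B$ through the (R5)-sequence, giving the generation statement.

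For part (2)(a), the candidate bounded hereditary cotorsion pair in $\mathcal{A}$ is $(i^{\ast}\mathcal{M}^\vee,\,i^{!}\mathcal{M}^\wedge)$, whose silting subcategory, by the bijection, is $i^{\ast}\mathcal{M}^\vee\cap i^{!}\mathcal{M}^\wedge$. The ext-vanishing axiom is where the hypotheses enter decisively: for $X\in\mathcal{M}^\vee$, $Y\in\mathcal{M}^\wedge$ and $k\ge 1$, Lemma~\ref{prop 311}(2) gives $\mathbb{E}_\mathcal{A}^k(i^{\ast}X,\,i^{!}Y)\cong\mathbb{E}_\mathcal{B}^k(i_{\ast}i^{\ast}X,\,Y)$, and the hypothesis $i_{\ast}i^{\ast}\mathcal{M}^\vee\subseteq\mathcal{M}^\vee$ forces the right-hand side to vanish. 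For (CP3)/(CP4) I apply the exact functor $i^{!}$ to the conflations provided by the cotorsion pair in $\mathcal{B}$ at the object $i_{\ast}A$, and use the hypothesis $i_{\ast}i^{!}\mathcal{M}^\vee\subseteq\mathcal{M}^\vee$ together with the identity $i^{!}T\cong i^{\ast}(i_{\ast}i^{!}T)$ (valid by Lemma~\ref{CY}(1)) to place the $\mathcal{T}$-terms inside $i^{\ast}\mathcal{M}^\vee$. Boundedness and heredity propagate along the same route. Part (2)(b) is analogous; because $j^{\ast}j_{!}\cong \Id_\mathcal{C}\cong j^{\ast}j_{\ast}$ the functor $j^{\ast}$ already commutes with both $\vee$- and $\wedge$-pullbacks, so a single hypothesis ($j_{\ast}j^{\ast}\mathcal{M}^\wedge\subseteq\mathcal{M}^\wedge$ or $j_{!}j^{\ast}\mathcal{M}^\vee\subseteq\mathcal{M}^\vee$) suffices.

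The principal obstacle lies in part (1): organizing the horseshoe-style diagrams that simultaneously control $i^{\ast}$, $i^{!}$ and $j^{\ast}$ on the glued terms through the (R4)/(R5)-sequences. Most of this bookkeeping is already absorbed into Lemma~\ref{main-silt-lemma}, and the remaining effort is the extension-vanishing transfer via Lemma~\ref{prop 311}. A secondary technical point is the verification $i^{!}j_{!}M_C\in\mathcal{M}_\mathcal{A}^\wedge$ needed for the object-level identification $M_\mathcal{B}=i_{\ast}M_A\oplus j_{!}M_C$.
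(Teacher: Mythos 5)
Your proposal follows essentially the same route as the paper: pass through the Adachi--Tsukamoto bijection, glue the bounded hereditary cotorsion pairs via Definition~\ref{glued} and \cite[Lemma 4.5]{WL}, verify (HCP) by decomposing $T$ through the triangle $j_!j^{\ast}T\to T\to i_{\ast}i^{\ast}T$ and transferring extensions with Lemma~\ref{prop 311}, and obtain boundedness from Lemma~\ref{main-silt-lemma}; part (2) likewise matches, with your direct adjunction argument for ext-vanishing being a minor variant of the paper's citation of \cite[Theorem 4.6]{MZ}. Your sketch of the object-level identification $M_{\mathcal{B}}=i_{\ast}M_A\oplus j_!M_C$ (in particular the check $i^{!}j_!M_C\in\mathcal{M}_{\mathcal{A}}^{\wedge}$) actually goes beyond the paper's written proof, which asserts but does not verify that claim.
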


\begin{proof} (1) Let $(\mathcal{T}_{1},\mathcal{F}_{1})$ and $(\mathcal{T}_{2},\mathcal{F}_{2})$ the corresponding bounded hereditary cotorsion pairs
$(\mathcal{M}_{\mathcal{A}}^{\vee},\mathcal{M}_{\mathcal{A}}^{\wedge})$
and $(\mathcal{M}_{\mathcal{C}}^{\vee},\mathcal{M}_{\mathcal{C}}^{\wedge})$, respectively. Let $(\mathcal{T},\mathcal{F})$ be the glued pair with respect to $(\mathcal{T}_{1},\mathcal{F}_{1})$ and $(\mathcal{T}_{2},\mathcal{F}_{2})$. By \cite[Lemma 4.5]{WL}, it is suffices to check the conditions (CP1) and (HCP) in Definition \ref{hcotors}. We divide the proof into the following steps:

\textbf{Step 1.} Take $T = T_{1} \oplus T_{2} \in \mathcal{T}$. By construction, we have $i^{\ast}T_{1} \oplus i^{\ast}T_{2} \cong i^{\ast}(T_{1} \oplus T_{2}) \in \mathcal{T}_{1}$. Since $\mathcal{T}_{1}$ is closed under direct summands, we have $i^{\ast}T_{1},i^{\ast}T_{2}\in\mathcal{T}_{1}$. Similarly, we have $j^{\ast}T_{1},j^{\ast}T_{2}\in\mathcal{T}_{2}$. Therefore, $\mathcal{T}$ is closed under direct summands. Similarly, we can prove  $\mathcal{F}$ is closed under direct summands. Hence, $(\mathcal{T},\mathcal{F})$ satisfies condition (CP1).

\textbf{Step 2.}
Since $i^{\ast}$ is exact, by \cite[Proposition 3.4(2)]{WL}, there exists an $\mathbb{E}_\mathcal{B}$-triangle $j_! j^\ast T\stackrel{}{\longrightarrow} T\stackrel{}{\longrightarrow} i_\ast i^\ast T\stackrel{}\dashrightarrow$. Applying $\Hom_\mathcal{B}(-,F)$ to above $\mathbb{E}_\mathcal{B}$-triangle,
we get the following long exact sequence
\begin{equation}\label{EXACT11}
\mathbb{E}_{\mathcal{B}}(T,F)\stackrel{}{\rightarrow}
\mathbb{E}_{\mathcal{B}}(j_! j^\ast T,F)\stackrel{}{\rightarrow}
\mathbb{E}^{2}_{\mathcal{B}}(i_\ast i^\ast T,F)\stackrel{}{\rightarrow} \mathbb{E}^{2}_{\mathcal{B}}(T,F)\stackrel{}{\rightarrow}
\mathbb{E}^{2}_{\mathcal{B}}(j_! j^\ast T,F)\stackrel{}{\rightarrow} \stackrel{}\cdots
\end{equation}

By Lemma \ref{prop 311}(2), we obtain that $\mathbb{E}^{n}_\mathcal{B}(i_{\ast}i^{\ast}T,F)\cong\mathbb{E}^{n}_\mathcal{A}(i^{\ast}T,i^{!}F)=0$, since $i^{\ast}T\in\mathcal{T}_1$ and $i^{!}F\in\mathcal{F}_1$. Similarly, by Lemma \ref{prop 311}(3), we have $\mathbb{E}^{n}_\mathcal{B}(j_{!}j^{\ast}T,F)\cong\mathbb{E}^{n}_\mathcal{C}(j^{\ast}T,j^{\ast}F)=0$. By the exactness of the long exact sequence (\ref{EXACT11}), we conclude that  $\mathbb{E}^{n}_\mathcal{B}(T,F)=0$. Thus, $(\mathcal{T},\mathcal{F})$ satisfes condition (HCP).

\textbf{Step 3.} We still need to show that $(\mathcal{T},\mathcal{F})$ is bounded.
For any $M \in \mathcal{B}$,
by Lemma \ref{main-silt-lemma}, we have an $\mathbb{E}_\mathcal{B}$-triangle
$K_{n-1} \stackrel{}{\longrightarrow} T_{n}\stackrel{}{\longrightarrow} M\stackrel{}\dashrightarrow$ with $T_{n} \in \mathcal{T}$, $i^{!}K_{n-1} \in (\mathcal{T}_{1})^{\wedge}_{m-1}$ and
$j^{\ast}K_{n-1} \in (\mathcal{T}_{2})^{\wedge}_{n-1}$.
Repeatedly, we have an $\mathbb{E}_\mathcal{B}$-triangle
$K_{i-1} \stackrel{}{\longrightarrow} T_{i}\stackrel{}{\longrightarrow} K_{i}\stackrel{}\dashrightarrow$ with $T_{i} \in \mathcal{T}$ for each $i \leq n-1$.
This implies that $M \in (\mathcal{T})_{n}^{\wedge}$. Consequently, we have $\mathcal{B} \subseteq \mathcal{T}^{\wedge}$. It is obvious that $\mathcal{T}^{\wedge} \subseteq \mathcal{B}$. Therefore, we conclude that $\mathcal{B} = \mathcal{T}^{\wedge}$. Similarly, we have $\mathcal{B} = \mathcal{F}^{\vee}$. Thus, $(\mathcal{T}, \mathcal{F})$ is a bounded hereditary cotorsion pair in $\mathcal{B}$. In particular, $\mathcal{T} \cap \mathcal{F}$ is a silting subcategory of $\mathcal{B}$.

(2) Let $(\mathcal{U}, \mathcal{V})$ denote the associated bounded hereditary cotorsion pair
$(\mathcal{M}^{\vee}, \mathcal{M}^{\wedge})$ in $\mathcal{B}$.
We first show that $i^{\ast}\mathcal{U}$ is closed under direct summands.
Let $A = A_{1} \oplus A_{2} \in i^{\ast}\mathcal{U}$.
It is obvious that $i_{\ast}A_{1} \oplus i_{\ast}A_{2} \cong i_{\ast}(A_{1} \oplus A_{2}) = i_{\ast}A \in \mathcal{B}$.
Since $i_{\ast}A \in i_{\ast}i^{\ast}\mathcal{U} \subseteq \mathcal{U}$ and $\mathcal{U}$ is closed under direct summands, it follows that both $i_{\ast}A_{1}$ and $i_{\ast}A_{2}$ belong to $ \mathcal{U}$. This implies that $A_{1} \cong i^{\ast}i_{\ast}A_{1} \in i^{\ast}\mathcal{U}$ and $A_{2} \cong i^{\ast}i_{\ast}A_{2} \in i^{\ast}\mathcal{U}$. Therefore, $i^{\ast}\mathcal{U}$ is closed under direct summands. The proof that $i^{!}{V}$ is closed under direct summands follows a similar argument. Thus, $(i^{\ast}{U},i^{!}{V})$ satisfies (CP1) in Definition \ref{hcotors}. By \cite[Lemma 4.5]{WL}, $(i^{\ast}{U},i^{!}{V})$ satisfy (CP2), (CP3) and (CP4) in Definition \ref{hcotors}.
By \cite[Theorem 4.6]{MZ}, $(i^{\ast}{U},i^{!}{V})$ satisfies (HCP) in Definition \ref{hcotors}.
It means that $(i^{\ast}\mathcal{U}, i^{!}\mathcal{V})$ is a hereditary cotorsion pair in $\mathcal{A}$.


It remains to show that $(i^{\ast}{U},i^{!}{V})$ is bounded.
For any $X \in \mathcal{A}$, then $i_{\ast}X \in \mathcal{B}$
and there exists a non-negative integer $n$ such that $i_{\ast}X \in (\mathcal{U}_{n})^{\wedge}$, since $(\mathcal{U},\mathcal{V})$ is a bounded hereditary cotorsion pair in $\mathcal{B}$.
Hence, we obtain an $\mathbb{E}_\mathcal{B}$-triangle
$K_{n-1} \stackrel{}{\longrightarrow} U_{n}\stackrel{}{\longrightarrow} i_{\ast}X \stackrel{}\dashrightarrow$ with $U_{n} \in \mathcal{U}$ and $K_{n-1} \in (\mathcal{U}_{n-1})^{\wedge}$.
We also have an $\mathbb{E}_\mathcal{B}$-triangle
$K_{i-1} \stackrel{}{\longrightarrow} U_{i}\stackrel{}{\longrightarrow} K_{i}\stackrel{}\dashrightarrow$ with $U_{i} \in \mathcal{U}$
for each $i \leq n-1$.
Applying the exact functor $i^{!}$ to above $\mathbb{E}_\mathcal{B}$-triangles, we obtain the following $\mathbb{E}_\mathcal{A}$-triangles
$i^{!}K_{n-1} \stackrel{}{\longrightarrow} i^{!}U_{n}\stackrel{}{\longrightarrow} i^{!}i_{\ast}X \stackrel{}\dashrightarrow$ with $i^{!}i_{\ast}X \cong X \in \mathcal{A}$, and
$i^{!}K_{i-1} \stackrel{}{\longrightarrow} i^{!}U_{i}\stackrel{}{\longrightarrow} i^{!}K_{i}\stackrel{}\dashrightarrow$.
Since $i_{\ast}i^{!}\mathcal{U} \subseteq \mathcal{U}$, we have $i^{!}U_{i} \cong i^{\ast}i_{\ast}i^{!}U_{i} \in i^{\ast}\mathcal{U}$ for each $i \leq n$.
This implies that $X \in (i^{\ast}\mathcal{U})_{n}^{\wedge}$.
Consequently, we have $\mathcal{C} \subseteq (i^{\ast}\mathcal{U})^{\wedge}$. It is obvious that $(i^{\ast}\mathcal{U})^{\wedge} \subseteq \mathcal{C}$. Therefore, $\mathcal{C} = (i^{\ast}\mathcal{U})^{\wedge}$ holds. Similarly, we have $\mathcal{C} = (i^{!}\mathcal{V})^{\vee}$.
Thus, $(i^{\ast}{\mathcal{U}},i^{!}{\mathcal{V}})$ is a bounded hereditary cotorsion pair in $\mathcal{C}$. In particular, $i^{\ast}\mathcal{U} \cap i^{!}\mathcal{V}$ is a silting subcategory of $\mathcal{A}$. We complete the proof of (a). The proof of (b) is similar.
\end{proof}

As an immediate corollary,   we recover the corresponding results in the triangulated category case, see \cite[Theorem 8.2.3]{Bo} and \cite[Theorem 2.2(3),(4)]{LVY}.
\begin{corollary}
Let $(\mathcal{A}, \mathcal{B}, \mathcal{C}, i^{*}, i_{\ast}, i^{!},j_!, j^\ast, j_\ast)$ be a recollement of triangulated categories as in {\rm (\ref{recolle})}.

$(1)$ Let $\mathcal{M}_{\mathcal{A}}$ and $\mathcal{M}_{\mathcal{C}}$ be silting subcategories in $\mathcal{A}$ and $\mathcal{C}$, respectively.
Then
$$\mathcal{M}_{\mathcal{B}}:=\{B\in \mathcal{B}~|~i^{\ast }B\in\mathcal{M}_{\mathcal{A}}^{\vee},j^{\ast}B\in \mathcal{M}_{\mathcal{C}}^{\vee},i^{!}B\in\mathcal{M}_{\mathcal{A}}^{\wedge},j^{\ast}B\in \mathcal{M}_{\mathcal{C}}^{\wedge}\}$$
is a silting subcategory in $\mathcal{B}$. In addition, if $\mathcal{M}_{\mathcal{A}}=\add M_{A}$ and $\mathcal{M}_{\mathcal{C}} = \add M_{C}$, then $M_{\mathcal{B}} = i_{\ast}M_{A} \oplus j_{!}M_{C}$.

$(2)$ Let $\mathcal{M}$ be a silting subcategory in $\mathcal{B}$.
 \begin{itemize}
   \item[(a)]  If $i_{\ast}i^{!}\mathcal{M}^{\vee} \subseteq \mathcal{M}^{\vee}$ and $i_{\ast}i^{\ast}\mathcal{M}^{\vee} \subseteq \mathcal{M}^{\vee}$,
       then $i^{\ast}\mathcal{M}^{\vee} \cap i^{!}\mathcal{M}^{\wedge}$ is a silting subcategory of $\mathcal{A}$.
       In addition, if $\mathcal{M} = \add M$, then $i^{\ast}M$ is a silting object of $\mathcal{A}$.
   \item[(b)] If $j_{\ast}j^{\ast}\mathcal{M}^{\wedge} \subseteq \mathcal{M}^{\wedge}$ or $j_{!}j^{\ast}\mathcal{M}^{\vee} \subseteq \mathcal{M}^{\vee}$,
       then $j^{\ast}\mathcal{M}^{\vee}\cap j^{\ast}\mathcal{M}^{\wedge}$ is a silting subcategory of $\mathcal{C}$.
       In addition, if $\mathcal{M} = \add M$, then $j^{\ast}M$ is a silting object of $\mathcal{C}$.
 \end{itemize}
\end{corollary}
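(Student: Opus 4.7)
The corollary is exactly the specialization of Theorem \ref{main-silt} to the triangulated setting. My plan is therefore not to rerun the argument from scratch, but to verify that every hypothesis of Theorem \ref{main-silt} holds automatically for a recollement of triangulated categories, so that parts (1) and (2) of the corollary follow immediately from the corresponding parts of Theorem \ref{main-silt}.

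Concretely, there are three hypotheses in Theorem \ref{main-silt} to check: that $\mathcal{B}$ has enough projective objects (in the extriangulated sense), that $i^{!}$ and $j_{!}$ are exact, and, for part (1), that $i^{\ast}$ is exact. For the first, recall that when a triangulated category $\mathscr{C}$ is regarded as extriangulated with $\mathbb{E}=\Hom_{\mathscr{C}}(-,-[1])$, the subcategory of projective objects is $\mathcal{P}(\mathscr{C})=\{0\}$; for any $M\in\mathscr{C}$ the shifted triangle $M[-1]\to 0\to M\to M$ is an $\mathbb{E}$-triangle realizing $M$ as a ``cone over $0$'', so $\mathscr{C}$ does have enough projectives. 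The same works for $\mathcal{B}$. For the second and third hypotheses, in any recollement of triangulated categories each of the six functors $i^{\ast}, i_{\ast}, i^{!}, j_{!}, j^{\ast}, j_{\ast}$ is a triangle functor, hence sends distinguished triangles to distinguished triangles; interpreting distinguished triangles as $\mathbb{E}$-triangles and using that every morphism in a triangulated category is compatible, these are exact functors in the sense of Definition \ref{exact functor}. In particular $i^{\ast}$, $i^{!}$ and $j_{!}$ are all exact.

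With these verifications in place, part (1) of the corollary is the conclusion of Theorem \ref{main-silt}(1), and part (2)(a), (2)(b) are the conclusions of Theorem \ref{main-silt}(2)(a), (2)(b). The only point one should comment on is the description of $M_\mathcal{B}$: under the extra hypothesis $\mathcal{M}_\mathcal{A}=\add M_A$, $\mathcal{M}_\mathcal{C}=\add M_C$, Theorem \ref{main-silt}(1) identifies the silting object as $i_{\ast}M_A\oplus j_{!}M_C$, and similarly for the objects $i^{\ast}M$ and $j^{\ast}M$ of part (2). I do not foresee a genuine obstacle here; the only mildly technical point is the very first one, namely checking that a triangulated category, though it has no nonzero projectives in the usual module-theoretic sense, still satisfies the extriangulated ``enough projectives'' condition via the cone-of-zero trick above.
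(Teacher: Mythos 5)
Your proposal is correct and takes the same route as the paper: the corollary is obtained by specializing Theorem \ref{main-silt}, and your verifications (a triangulated category has enough projectives with $\mathcal{P}(\mathscr{C})$ the zero objects, as the paper itself notes in Section 2, and all six functors of a triangulated recollement are triangle functors, hence exact in the sense of Definition \ref{exact functor}) are exactly the hypotheses that need checking. If anything, you are more explicit than the paper, whose one-line proof only records the dictionary between bounded hereditary cotorsion pairs and bounded co-t-structures needed to match the corollary with the cited results of Bondarko and Liu--Vitoria--Yang.
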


\begin{proof} By \cite[Example 3.4(1)]{AT} and \cite[Remark 4.2]{AT}, bounded co-t-structures coincides with the definition of bounded hereditary cotorsion pairs of triangulated categories.
\end{proof}



\end{document}